\documentclass[11pt,twoside]{article}
\usepackage{times}
\usepackage{CJK}
\usepackage{mathptmx}
\usepackage{amsmath,amssymb,amsthm}
\usepackage{enumerate}
\usepackage{cite}
\usepackage{mathrsfs,graphicx}
\usepackage{float}
\usepackage{stmaryrd}
\usepackage[margin=1in]{geometry}  
\usepackage{graphicx}
\usepackage{amssymb}             
\usepackage{amsmath}
\usepackage{slashed}            
\usepackage{amsfonts}              
\usepackage{amsthm}
\usepackage{breqn}
\usepackage{verbatim}              
\usepackage{enumitem}
\usepackage{fancyhdr}
\usepackage{color}
\usepackage[pagewise]{lineno}
\usepackage{amssymb}
\usepackage{slashed}
\usepackage{bbm}

\usepackage{ifpdf}
\usepackage{color}
\usepackage{graphicx}
\DeclareGraphicsExtensions{.eps,.bmp}
\pagestyle{myheadings}
\markboth{}{}
\textwidth=160mm
\textheight=220mm
\oddsidemargin=0mm
\evensidemargin=0mm
\headheight=10mm
\headsep=3mm
\topmargin=0mm

\begin{document}

\footskip=0pt
\footnotesep=2pt

\allowdisplaybreaks

\newtheorem{claim}{Claim}[section]

\theoremstyle{definition}
\newtheorem{thm}{Theorem}[section]
\newtheorem*{thmm}{Theorem}
\newtheorem{mydef}{Definition}[section]
\newtheorem{lem}[thm]{Lemma}
\newtheorem{prop}[thm]{Proposition}
\newtheorem{remark}[thm]{Remark}
\newtheorem*{propp}{Proposition}
\newtheorem{cor}[thm]{Corollary}
\newtheorem{conj}[thm]{Conjecture}
\def\F{\mathcal{F}}
\def\la{\lambda}
\def\tR{\tilde{R}}
\def\ds{\displaystyle}
\def\ve{\varepsilon}
\def\al{\alpha}
\def\be{\beta}
\def\na{\nabla}
\def\dl{\delta}
\def\ls{\lesssim}
\def\dl{\delta}
\def\de{\delta}
\def\om{\omega}
\def\ve0{\varepsilon_0}
\def\ve{\varepsilon}
\def\De{\Delta}
\def\vp{\varphi}
\def\t{\tilde}
\newcommand{\bd}[1]{\mathbf{#1}}  
\newcommand{\R}{\mathbb{R}}      
\newcommand{\ZZ}{\mathbb{Z}}      
\newcommand{\Om}{\Omega}
\newcommand{\bv}{\mathbf{v}}
\newcommand{\bn}{\mathbf{n}}
\newcommand{\bU}{\mathbf{U}}
\newcommand{\q}{\quad}
\newcommand{\p}{\partial}
\newcommand{\n}{\nabla}
\newcommand{\f}{\frac}

\numberwithin{equation}{section}

\title{The critical power of short pulse initial data on
the global existence or blowup of smooth solutions to
3-D semilinear Klein-Gordon equations}
\author{}

\author{Shen Jindou$^1$, \quad  Yin Huicheng$^{2,}$
\footnote{Shen Jindou (jdshenmath@163.com) and  Yin Huicheng
(huicheng@nju.edu.cn, 05407@njnu.edu.cn) are
supported by the NSFC (No.12331007) and the National key research and development program (No.2020YFA0713803).
}
\vspace{0.3cm}\\
\small
1. Department of Mathematics, Nanjing University, Nanjing 210093, China.\\
\small
2. School of Mathematical Sciences and Mathematical Institute,\\
\small
Nanjing Normal University, Nanjing 210023, China.\\}
\vspace{0.2cm}

\date{}
\maketitle
\centerline{}

\date{}
\maketitle
\thispagestyle{empty}
\begin{abstract}
\vspace{0.2cm}
It is well-known that there are global small data smooth solutions
for the 3-D semilinear Klein-Gordon equations
$\square u + u = F(u,{\partial u})$  with cubic nonlinearities.
However, for the short pulse initial data
$(u, \p_tu)(0, x)=({\delta^{\nu+1}}{u_0}({\frac{x}{\delta}}),{\delta^\nu }{u_1}({\frac{x}{\delta}}))$
with $\nu\in\Bbb R$ and $(u_0, u_1)\in C_0^{\infty}(\Bbb R)$,
which are a class of large initial data,
we establish that when $\nu\le -\f{1}{2}$, the solution $u$ can blow up in finite time for some suitable
choices of $(u_0, u_1)$ and cubic nonlinearity $F(u,{\p u})$;
when  $\nu>-\f{1}{2}$, the smooth solution $u$ exists globally.
Therefore, $\nu=-\f{1}{2}$ is just the critical power corresponding to the global existence or blowup of smooth
 short pulse solutions for the  cubic semilinear Klein-Gordon equations.
\end{abstract}

\vskip 0.2cm

{\bf Keywords:}  Klein-Gordon equation, short pulse initial data, global solution, blowup, critical power

\vskip 0.2 true cm

{\bf Mathematical Subject Classification 2000:} 35L70, 35L65, 35L67

\vskip 0.2 true cm

\section{Introduction}

In this paper, we are concerned with the following 3-D  cubic semilinear Klein-Gordon equation

\begin{equation}\label{equation}
\left\{ \begin{array}{l}
\square u + u = F(u,\partial u),\quad (t,x)\in \Bbb R^{1+3},\\
u(0, x) = {\delta ^{\nu + 1}}{u_0}({\frac{x}{\delta }}),\quad {\partial _t}u(0,x) ={\delta^\nu }{u_1}(\frac{x}{\delta}),
\end{array} \right.
\end{equation}
where $t\ge 0$, $x = ({x_1}, x_2 ,{x_3})\in\Bbb R^3$,  $\partial=(\partial_{0},\partial_{1},\partial_{2},\partial_{3})$
with $\partial_{0}=\partial_{t}$ and $\partial_{a}=
\partial_{x_{a}}$ $(a=1,2,3)$, $\square  = \partial _0^2 - \Delta$ with $\Delta= \sum\limits_{a = 1}^3 {\partial _a^2}$,
$\dl>0$ is sufficiently small, $\nu\in\Bbb R$ and $(u_0,u_1)\in C_0^\infty(\Bbb R^3)$.
In addition, $\partial_{-1}=\mathbbm{1}$ and
$F(u,\partial u) = \sum\limits_{j,k,l =  - 1}^3 {g^{jkl}{\partial _j}u{\partial _k}u{\partial _l}u}$
with $g^{jkl}\in \mathbb{R} (j,k,l=-1,0,\cdots,3)$ being some constants.
Without loss of generality, $\text{supp}u_0 ,\text{supp}u_1\subset \left\{x:|x|\le 1\right\}\text{and}~{\left\| {{u_0}} \right\|_{{H^{N + 1}(\mathbb{R}^3)}}} + {\left\| {{u_1}} \right\|_{{H^{N}(\mathbb{R}^3)}}} \le 1$ $(N\in \Bbb N,~N\ge 4)$ are assumed.

Set $\partial_\delta:=\delta( \partial_t, \partial_1, \partial_2, \partial_3)$,
$L_\delta:=\delta (L_1, L_2, L_3)$
with $L_a=x_a\partial_t+t\partial_a$ ($a=1,2,3$) and $L=\left(L_1, L_2, L_3\right)$.

Our main results are
\begin{thm}\label{mainthm}
For $\nu>-\f12$ and small $\delta>0$, problem \eqref{equation} admits a global smooth solution
$u \in C([0,\infty),$ $H^{N + 1}(\mathbb{R}^3))\cap {C^1}([0,\infty),{H^N}(\mathbb{R}^3))$ such that
for $|I|+|J| \le N - 2$ and $t \ge 0$,
\begin{equation}\label{0-S}
\begin{aligned}
\left| {{\partial_\delta ^{\rm{I}}}{L_\delta^J}u\left( {t,x} \right)} \right| &\ls  \left\{ \begin{array}{l}
{{\delta ^{\nu+1} }  },\qquad \qquad \quad ~t\le \delta/2,
\\
{{\delta ^\nu } {{\left(\delta^{-1} t+2\right)}^{ - \frac{3}{2}}}},~~~t>\delta/2,
\end{array} \right.
\\
\left| {\partial_\delta{\partial_\delta ^{\rm{I}}}{L_\delta^J}u\left( {t,x} \right)} \right| &\ls  \left\{ \begin{array}{l}
{{\delta ^{\nu+1} }  },\qquad \qquad \quad ~~~t\le \delta/2,
\\
{{\delta ^{\nu+1} } {{\left(\delta^{-1} t+2\right)}^{ - 1}}},~~t>\delta/2.
\end{array} \right.
\end{aligned}
\end{equation}
\end{thm}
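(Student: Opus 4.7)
I plan to use a continuation/bootstrap argument built on the rescaled Klainerman vector-field method. Since the Lorentz boosts $L_a=x_a\partial_t+t\partial_a$ and the ordinary derivatives commute with $\square+1$, so do $\partial_\delta=\delta\partial$ and $L_\delta=\delta L$. Consequently $\partial_\delta^I L_\delta^J u$ satisfies a Klein--Gordon equation of the same form whose source is a differentiated cubic $F$. The bootstrap ansatz is that \eqref{0-S} holds on the maximal existence interval $[0,T^*)$ with doubled constants; the goal is to recover it with the undoubled constants on the same interval, forcing $T^*=\infty$.

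\textbf{Energy estimate.} Define the top-order weighted energy
\[ \mathcal{E}_N(t):=\sum_{|I|+|J|\le N}\Bigl(\|\partial(\partial_\delta^I L_\delta^J u)(t)\|_{L^2}^2+\|\partial_\delta^I L_\delta^J u(t)\|_{L^2}^2\Bigr). \]
A change of variables $y=x/\delta$ in the rescaled profiles $u_0,u_1$ yields $\mathcal{E}_N(0)\lesssim \delta^{2\nu+3}$ (each $y$-integration absorbs a $\delta^3$). Applying the Klein--Gordon energy identity to the commuted equations and estimating the commutators $[\partial_\delta^I L_\delta^J,F]$ by placing one high-order factor in $L^2$ (bounded by $\mathcal{E}_N^{1/2}$) and two low-order factors in $L^\infty$ (bounded via the bootstrap assumption \eqref{0-S}), one obtains
\[ \mathcal{E}_N(t)\lesssim \mathcal{E}_N(0)\exp\Bigl(C\int_0^t\|\partial u(s)\|_{L^\infty}^2\,ds\Bigr). \]

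\textbf{Pointwise decay via Klainerman--Sobolev.} Apply the standard Klein--Gordon Klainerman--Sobolev inequality $(1+t)^{3/2}|v(t,x)|\lesssim \sum_{|K|\le 3}\|\Gamma^K v(t)\|_{L^2}$ with $\Gamma\in\{\partial,L\}$, rescaled by $s=\delta^{-1}t$, $y=\delta^{-1}x$, to $v=\partial_\delta^I L_\delta^J u$; this gives the first line of \eqref{0-S}. The second, weaker-decay but $\delta$-improved, bound is obtained by exploiting, away from the light cone, the identity expressing $\partial_s$ as a linear combination of the rescaled boosts and the Euler field with coefficients of size $O((s+|y|)^{-1})$; near the cone, a direct wave-like argument using that the initial pulse lives in an annulus of width $O(\delta)$ gives the same bound. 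The net effect is to trade $(\delta^{-1}t+2)^{1/2}$ of decay for an extra factor $\delta$ relative to the first line.

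\textbf{Closing the bootstrap.} The second line of \eqref{0-S} divided by $\delta$ gives $|\partial u|\lesssim \delta^\nu(\delta^{-1}t+2)^{-1}$, hence
\[ \int_0^\infty\|\partial u(s)\|_{L^\infty}^2\,ds\lesssim \delta^{2\nu}\int_0^\infty(\delta^{-1}s+2)^{-2}\,ds\lesssim \delta^{2\nu+1}. \]
For $\nu>-1/2$ this exponent is positive, so the Gr\"onwall factor above is $1+o(1)$ as $\delta\to 0$; the energy stays $\lesssim\mathcal{E}_N(0)$, and reinjecting into Klainerman--Sobolev recovers \eqref{0-S} with the undoubled constants, closing the bootstrap and yielding global existence. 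The main obstacle I anticipate is the second line of \eqref{0-S}: simultaneously gaining a factor $\delta$ while losing only $(\delta^{-1}t+2)^{1/2}$ of decay (rather than the naive $(\delta^{-1}t+2)^{3/2}$ from a direct Klainerman--Sobolev argument) is delicate, and it is precisely this balance that makes $\nu=-1/2$ emerge as the critical exponent from the integral above.
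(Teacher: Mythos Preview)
Your overall plan (bootstrap, vector-field energy, Sobolev-type decay, Gr\"onwall closing with the factor $\delta^{2\nu+1}$) matches the paper's, but the implementation has a genuine gap exactly where you flag it. The inequality $(1+t)^{3/2}|v(t,x)|\lesssim\sum_{|K|\le 3}\|\Gamma^K v(t,\cdot)\|_{L^2}$ with $\Gamma\in\{\partial,L\}$ on a fixed-$t$ slice is not available: without the scaling field $S$ (which does not commute with $\square+1$), the family $\{\partial,L,\Omega\}$ on constant-time slices only yields wave-type decay $(1+t+r)^{-1}(1+|t-r|)^{-1/2}$, i.e.\ $t^{-1}$ near the cone. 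The $t^{-3/2}$ Klein--Gordon rate comes from the Sobolev inequality on hyperboloids $\varkappa_s=\{t^2-|x|^2=s^2\}$, namely $t^{3/2}|\phi|\lesssim\sum_{|I|\le 2}\|L^I\phi\|_{L^2(\varkappa_s)}$, which uses only the boosts (tangent to $\varkappa_s$) and is mass-independent. This last point matters because after your rescaling the mass becomes $\delta^2$; any decay mechanism that relied on the mass would degenerate as $\delta\to 0$, whereas the hyperboloidal Sobolev inequality does not. The paper therefore rescales first and then runs the whole bootstrap on the hyperboloidal energy $M_N^\delta(s)=\sum_{|I|+|J|\le N}\mathcal E^\delta(s,\partial^I L^J w)^{1/2}$, not on a constant-$t$ energy.

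The second line of \eqref{0-S} is likewise not obtained by writing $\partial$ through the Euler field (again, no $S$ for Klein--Gordon), but from a weighted hyperboloidal estimate: applying the hyperboloidal Sobolev inequality to $(s/t)\partial_\alpha\partial^IL^Jw$ and pulling out $(t/s)^2$ gives $\|(t/s)\partial_\alpha\partial^IL^Jw\|_{L^\infty(\varkappa_s)}\lesssim s^{-1}M_N^\delta(s)$, and since $t\lesssim s^2$ inside the cone this yields $|\partial w|\lesssim t^{-1}\delta^{\nu+1}$. In the energy estimate these $(s/t)$ and $(t/s)$ weights pair up in the cubic nonlinearity so that the integrand is $CQ^2\delta^{2\nu+1}s^{-3/2}M_N^\delta(s)$, integrable in $s$, and Gr\"onwall produces exactly the factor $\exp(CQ^2\delta^{2\nu+1})$ you wrote---so your final computation and the identification of $\nu=-\tfrac12$ are correct, but they take place in the hyperboloidal parameter $s$, not in $t$, and require the hyperboloidal energy rather than the constant-time one you defined.
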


\begin{thm}\label{mainthm-1}
For $\nu\le -\f12$ and $F(u,\partial u)=(\partial_t u)^2\partial_1u$,
there exists a class of $(u_0(x), u_1(x))$ such that the solution $u$ of  \eqref{equation} can blow up  in finite time.
\end{thm}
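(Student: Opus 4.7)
My approach is to rescale to the pulse frame and exploit the failure of the null condition for $F=(\partial_t u)^2\partial_1 u$ along waves moving in the $-x_1$-direction to produce a Riccati-type finite-time blowup along a null characteristic. Setting $U(s,y):=\delta^{-(\nu+1)}u(\delta s,\delta y)$, the problem \eqref{equation} becomes
\begin{equation*}
\square_{s,y}U+\delta^2 U=\delta^{2\nu+1}(\partial_s U)^2\partial_{y_1}U,\qquad (U,\partial_s U)|_{s=0}=(u_0(y),u_1(y)),
\end{equation*}
an $O(1)$-data cubic semilinear wave-type equation whose coupling $\delta^{2\nu+1}\ge 1$ for $\nu\le-\tfrac12$; the mass $\delta^2 U$ is a negligible perturbation on the $s=O(1)$ timescale where the singularity will form. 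In null coordinates $\tau=s-y_1$, $\rho=s+y_1$ with $A:=\partial_\tau U$, $B:=\partial_\rho U$, since $\square_{s,y}=4\partial_\tau\partial_\rho-\partial_{y_2}^2-\partial_{y_3}^2$ and $(\partial_s U)^2\partial_{y_1}U=(B-A)(A+B)^2$, the equation rearranges to
\begin{equation*}
4\partial_\tau B=\delta^{2\nu+1}(B-A)(A+B)^2+\partial_{y_2}^2 U+\partial_{y_3}^2 U-\delta^2 U.
\end{equation*}

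The class of initial data I would take is of product form $u_0(y)=\chi(y_2)\chi(y_3)f(y_1)$ and $u_1(y)=\chi(y_2)\chi(y_3)f'(y_1)$, with $\chi\in C_0^\infty(\mathbb{R})$ equal to one near zero and $f\in C_0^\infty(\mathbb{R})$ supported in $(-1,0)$ satisfying $f'\ge c_0>0$ on a subinterval $I$. Then $A|_{s=0}=\tfrac12(u_1-\partial_{y_1}u_0)\equiv 0$ on the tube $I\times\{\chi=1\}$, while $B|_{s=0}=\tfrac12(u_1+\partial_{y_1}u_0)=f'(y_1)\ge c_0$ there, and the tangential derivatives vanish on the tube initially. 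Along a characteristic $\rho=\rho_0\in I$ --- which, since $\rho_0\in(-1,0)$, stays strictly inside the light cone of the data support --- the leading-order Riccati model $\partial_\tau B=\tfrac14\delta^{2\nu+1}B^3$ blows up at $\tau-\tau_0=2\delta^{-(2\nu+1)}B(\tau_0)^{-2}$, which is finite and $O(1)$ in the critical case $\nu=-\tfrac12$, and even smaller for $\nu<-\tfrac12$. Translating back through the rescaling yields a finite blowup time $t^*=O(\delta^{-2\nu})$ for $u$.

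The rigorous upgrade is a continuity/bootstrap argument. Assuming the smooth solution of \eqref{equation} exists past some $s_*$ slightly below the Riccati time, I would use null-frame higher-order energy estimates on the above characteristic tube, together with finite propagation speed and the structure of the chosen data, to show that $|A|$, $|\partial_{y_2}U|$, $|\partial_{y_3}U|$, and $\delta|U|$ all remain much smaller than $B$ throughout. The right-hand side of the $B$-equation then equals $\delta^{2\nu+1}B^3(1+o(1))$ on the tube, and an ODE comparison forces $B$ to blow up before $s_*$, contradicting smoothness. The main obstacle is precisely this error control on the critical $\tau=O(1)$ timescale: the cubic coupling generically creates $A$ and tangential derivatives even from zero initial values, and no power of $\delta$ may be lost --- a naive Gronwall loop picking up $e^{O(\delta^{-1})}$ factors would destroy the Riccati mechanism. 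One therefore needs a carefully weighted null-frame energy in which $B$ is allowed to grow along the Riccati trajectory while the ``bad'' derivatives $A$, $\partial_{y_2}U$, $\partial_{y_3}U$ are pinned uniformly to their vanishing initial values on the tube, mirroring (but sharpening, in the opposite direction) the null-frame estimates used in the global-existence part of the paper.
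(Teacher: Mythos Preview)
Your characteristic/Riccati plan is a genuinely different route from the paper's. The paper never works pointwise along null rays; instead it introduces the integral functional
\[
\Upsilon(t)=\int_{\mathbb{R}^3}\partial_t u\,\partial_1 u\,dx,
\]
chooses product data (similar in spirit to yours) so that $\Upsilon(0)>18\delta^2$ whenever $\nu\le-\tfrac12$, and computes from the equation that $\Upsilon'(t)=\int(\partial_t u\,\partial_1 u)^2\,dx$, the linear and mass contributions integrating to zero against $\partial_1 u$. Finite propagation speed confines $\text{supp}\,u$ to $\{|x|\le t+\delta\}$, so Cauchy--Schwarz gives $\Upsilon^2(t)\le(t+\delta)^3\Upsilon'(t)$, and integrating this ODE inequality forces blowup before $t=\big(\tfrac{3}{2\sqrt2}-1\big)\delta$. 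No bootstrap, no null-frame energies, no separation of good and bad derivatives.

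The obstacle you flag in your own plan is real, not cosmetic. At the critical exponent $\nu=-\tfrac12$ the coupling is exactly $1$, and since $\partial_\rho A=\partial_\tau B=\tfrac14(B-A)(A+B)^2+\cdots$ with $B=O(1)$ across an $O(1)$ $\rho$-interval, $A$ is driven to $O(1)$ immediately; there is no small parameter keeping $A$ subordinate to $B$, so the sign of $B-A$ (hence the direction of the Riccati drift) is genuinely in doubt. You can lighten the load by using finite speed of propagation to reduce \emph{exactly} to the 1D problem on a shrinking tube (take the plateau of $\chi$ wider than twice the expected blowup time), but even in 1D the coupled $(A,B)$ system for this cubic needs a monotonicity or comparison argument beyond a bare Riccati ODE. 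The paper's functional method bypasses all of this in a few lines; what your approach would buy, if completed, is a sharper geometric picture of where the singularity forms rather than just a lifespan bound.
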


\begin{remark}\label{R0-1}
From Theorems \ref{mainthm} and \ref{mainthm-1}, one knows that
$\nu=-\f{1}{2}$ is just the critical power corresponding to the global existence or blowup of smooth
 short pulse solutions to problem \eqref{equation} in the case of  cubic semilinear Klein-Gordon equation.
\end{remark}

Let us recall some basic results on the global small data smooth solutions to the nonlinear Klein-Gordon equations
\begin{equation}\label{0-E}\begin{cases}
\Box u+u=F(u,\partial u),\quad (t,x)\in \Bbb R^{1+n},\\[2mm]
(u,\p_t{u})(0,x)=\dl (u_{0},u_{1})(x),
\end{cases}
\end{equation}
where $t\ge 0$, $x=(x_1, ..., x_n)$, $n\ge 1(n\in \Bbb N)$, $\dl>0$ is sufficiently small, $(u_{0},u_{1})\in C_0^{\infty}(\Bbb R^n)$,
$F$ is at least quadratic with respect to its
arguments.  It is well-known that when $n\ge 2$,
\eqref{0-E} admits a global smooth solution $u$
(see  \cite{K}, \cite{OTT}, \cite{S} and \cite{ST}); when $n=1$ and $F$ satisfies the corresponding null
condition, \eqref{0-E} has a global solution (see \cite{De-01}).
Therefore, the cubic semilinear
equation in \eqref{equation} has a global small data solution.
In addition, for the small initial data with  noncompact supports, there also are many
interesting  global existence or long time existence results on the nonlinear
Klein-Gordon equations (see \cite{DF00}, \cite{HY23}-\cite{HY23-1} and \cite{Stingo18}-\cite{Stingo23}).

On the other hand, consider the following 3-D quasilinear wave equation
 \begin{equation}\label{main equation}
  -\big(1+(\p_tu)^p\big)\p_t^2u+\De u=0,\quad p\in\mathbb N
 \end{equation}
together with  such a class of short pulse initial data
 \begin{align}\label{initial data}
  (u,\p_tu)(1,x)=\big(\de^{2-\ve_0}u_0(\frac{r-1}{\de},\om),
    \de^{1-\ve_0}u_1(\frac{r-1}{\de},\om)\big),
 \end{align}
where $0<\ve_0<1$, $\om=\f{x}{r}\in\Bbb S^2$ with $r=\sqrt{x_1^2+x_2^2+x_3^2}$ and $(u_0(\rho,\om), u_1(\rho,\om))\in C_0^{\infty}\left((-1,0)\times\mathbb{S}^2\right)$.
Under the outgoing constraint conditions
 \begin{equation}\label{condition on data}
   (\p_t+\p_r)^ku(1,x)=O(\de^{2-\ve_0-k\max\{0,1-(1-\ve_0)p\}}),\ k=1,2,
   \end{equation}
it is shown that when $p>p_c$ with $p_c=\f{1}{1-\ve_0}$, \eqref{main equation} with \eqref{initial data}
and \eqref{condition on data} has a global smooth solution $u$; when $1\le p\le p_c$, the smooth
solution $u$ can blow up and further the shock is formed for the suitable choice of $(u_0,u_1)$
(see \cite{Ding6}, \cite{Lu1} and \cite{MY}).
Here it is pointed out that the short pulse initial data \eqref{initial data} are firstly introduced
by D. Christodoulou in \cite{C09}. By the
short pulse method, the authors in monumental papers \cite{C09} and \cite{K-R}
showed that the black holes
can be formed in vacuum spacetime and the blowup mechanism is due to the condensation of the
gravitational waves for the 3-D Einstein general relativity equations.

Motivated by the results in  \cite{Ding6}, \cite{Ding-Xin-Yin} and \cite{Lu1}, we start to study the global existence
or blowup of smooth
 short pulse solutions to problem  \eqref{equation}  for variant scope of $\nu$, which is different from the global
 existence problem of small data solutions mentioned above.
By  taking the transformation $(\tau, z)=( \frac{t}{\delta }+2,\frac{x}{\delta})$ and
writing $w(\tau,z):=u\left(\dl(\tau-2),\dl z\right)$, then $\eqref{equation}$  can be changed into the
following form (without confusions,
$(\tau,z)$ is still denoted by $(t,x)$ and $w(\tau,z)$ is denoted by $u(t,x)$ accordingly)
\begin{equation}\label{scaling equation}
\left\{ \begin{array}{l}
\square u + \delta^2u = {F_\delta}(u,\partial u),\quad t\ge 2, x\in\Bbb R^3,\\
u(2,x)= {\delta ^{\nu + 1}}{u_0}(x),\quad {\partial _t}u(2,x)={\delta^{\nu +1} }{u_1}(x),
\end{array} \right.
\end{equation}
where ${F_\delta }(u,\partial u)= \sum\limits_{j,k,l =  - 1}^3 {\t g}^{jkl}(\dl){\partial _j}u{\partial _k}u{\partial _l}u$
and ${\t g}^{jkl}(\dl) = g^{jkl} \delta^{2-n_{j}-n_{k}-n_{l}}$
with $n _{-1}=0$ and $n _{i}=1$ for $i=0,1,2,3$.

In order to investigate \eqref{equation}, it suffices to study problem \eqref{scaling equation}.
For this purpose, as in \cite{K}, \cite{H} and \cite{L-Ma}, we will focus on
the hyperboloidal foliation $\varkappa_s := \{(t,x)\in  \mathcal{K}: s^2 =t^2-|x|^2\}$ with
$s\ge s_0^*=\sqrt{3}$ and $\mathcal{K}=~\{(t,x) : t\geq t_0=2, t\geq |x|+1\}$ for problem \eqref{scaling equation}.
Define the weighted energy $\mathcal{E}^\delta(s,u)$ of \eqref{scaling equation} as
\begin{equation}\label{energy}
\begin{aligned}
\mathcal{E}^\delta (s,u) &:=\int_{\varkappa_s} {[(\partial_t u)^2+\sum_{a=1}^3(\partial_a u)^2+2\sum_{a=1}^3(\frac{x^a}{t}\partial
_t u\partial_a u) +\delta^2u^2]dx}\\
\end{aligned}
\end{equation}
and
\begin{equation}\label{2-15}
M_N^\delta(s):=\sum_{|I|+|J|\leq N}\mathcal{E}^\delta(s,\partial^IL^Ju)^{\frac{1}{2}}.
\end{equation}
Under the assumption $M_N^\delta(s) \le Q\delta^{\nu +1}$  for $s\in [s_0, \infty)(s_0=2)$ and some suitable positive constant $Q>0$,
when $M_N^\delta(s) \le \f{Q}{2}\delta^{\nu +1}$ is shown by the energy estimate,  the smallness of $\dl>0$
and the fact $\nu>-\f12$,
Theorem \ref{mainthm} can be derived by the continuity argument. To prove Theorem \ref{mainthm-1},
motivated by Proposition 7.8.8 of \cite{H} for 1-D semilinear Klein-Gordon equation,
we define the functional for problem \eqref{equation} as follows
\begin{equation}\label{5-5}
\Upsilon (t)=\int_{{\mathbb{R}^3}} {{\partial _t}u{\partial _{{1}}}udx}.
\end{equation}
By the equation $\Box u+u=(\partial_t u)^2\partial_1u$ and integration by parts, one can obtain
\begin{equation}\label{HC-1}
\Upsilon^2(t)\le(t+\delta)^3\Upsilon'(t).
\end{equation}
For some suitable choices of $(u_0,u_1)$, $\Upsilon(0)>18\delta^2$ holds under the condition of $\nu\le-\f12$.
This, together with \eqref{HC-1}, yields the blowup result in Theorem \ref{mainthm-1}.

\vskip 0.2 true cm

\noindent \textbf{Notations:} Through the paper,
we use $a,b,c,  \in \{1,2,3\} $ and $\alpha, \beta,\gamma,\cdots\in \{0,1,2,3\}$ to denote the
space indices and the spacetime indices, respectively.  Both $\dl_{ab}$ and $\dl_{\alpha\beta}$ are Kronecker symbols.
In addition, $f_1\lesssim f_2$ means that there exists a
generic positive constant
$C$ independent of the parameter $\dl>0$ in \eqref{equation} and the variables $(t,x)$ such that $f_1\leq Cf_2$.

In addition, the following notations will be used:
\begin{align*}
&t_0:=2,\quad s_0:=2,\quad s_0^*:=\sqrt{3},\\
&\Omega_{ab}=x_a\partial_b-x_b\partial_a,\quad a,b=1,2,3,~a<b,\\
&L_a=x_a\partial_t + t\partial_a,\quad \underline{\partial}_a=\frac{x_a}{t}\partial_t + \partial_a,
\quad a=1,2,3,\\
&\p=\left(\p_0,\p_1,\p_2,\p_3\right),\quad \p_x=\left(\p_1,\p_2,\p_3\right),\quad L=\left(L_1,L_2,L_3\right),
\\
&\underline{\partial}_\perp =  \partial_t + \sum_{a=1}^3{\frac{x_a}{t}\partial_a},
\\
&\mathcal{K}_{\left[s_0,\infty\right)} \colon=~\{ (t,x) :~ s_0^2 \le t^2-|x|^2< \infty\},
\\
&\mathcal{K} \colon=~\{(t,x) : t\geq t_0, ~t\geq |x|+1\},
\\
&\varkappa_s := \{(t,x)\in \mathcal{K}:~ s^2 =t^2-|x|^2\}\quad\text{for $s\ge s_0^*$},\\
&G_s:=\{(t,x)\in \mathcal{K}:~s_0^2 \le t^2-|x|^2\leq s^2\}\quad\text{for $s\ge s_0$},\\
&V_s:=\{(t,x)\in \mathcal{K}:~(s_0^*)^2 \le t^2-|x|^2\leq s^2\}\quad\text{for $s_0\ge s\ge s_0^*$},\\
&D_s:=\{(t,x): t=2,~ 4-s^2 \le |x|^2\leq 1\}\quad\text{for $s_0\ge s\ge s_0^*$},
\\
&\Lambda_s:=\{(t,x): t=2,~ |x|^2=4-s^2\}\quad\text{for $s_0\ge s\ge s_0^*$},
\\
&\left\| u \right\|_{L_{\varkappa_s}^2}^2
:=\int_{\varkappa_s}|u(t,x)|^2dx=\int_{\Bbb {R}^{3}} \left|u(\sqrt{s^2+|x|^2
},x)\right|^2dx,
\\
&\left\| u \right\|_{L_{\varkappa_s}^\infty}:=
\sup_{\varkappa_s} \left|u(t,x)\right|.
\end{align*}
\input{tu.TpX}

\section{Preliminaries}

Note that the energy $\mathcal{E}^\delta$ in \eqref{energy} is equivalent to
\begin{equation}\label{equivalent energy}
\begin{aligned}
\mathcal{E}^\delta (s,u)&=\int_{\varkappa_s}[(\frac{s}{t}\partial_t u)^2+\sum_{a=1}^3 (\underline{\partial}_a u)^2 + \delta^2u^2]dx \\
&=\int_{\varkappa_s}[(\underline{\partial}_\perp u)^2 + \sum_{a=1}^3 (\frac{s}{t}\partial_a u)^2 + \sum_{1\le a<b\le 3}(t^{-1}\Omega_{ab} u)^2 +\delta^2u^2]dx.
\end{aligned}
\end{equation}
For $s\in[s_0^*,\infty)$ and $|I|+|J|\leq N$,
it follows from \eqref{equivalent energy} that
\begin{equation}\label{Y0-4}
\sum_{\alpha=0}^3\|\frac{s}{t}\partial_\alpha \partial^IL^Ju\|_{L_{\varkappa_s}^2}+\delta\| \partial^IL^Ju\|_{L_{\varkappa_s}^2} \lesssim \mathcal{E}^\delta (s,\partial^IL^Ju)^{1/2}.
\end{equation}

\begin{lem}\label{Y-01}
(Proposition 5.1.1 in \cite{L-Ma})
Let $\phi$ be a sufficiently smooth function supported in  $\mathcal{K}_{\left[s_0,\infty\right)}$.
Then for $s=\sqrt{t^2-|x|^2}\ge s_0$, one has
\begin{equation}\label{Y-Sob}
\sup_{\varkappa_s}t^{3/2}|\phi(t,x)|\lesssim \sum_{|I|\leq 2}\|
L^I\phi\|_{L_{\varkappa_s}^2}.
\end{equation}
\end{lem}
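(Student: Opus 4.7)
The plan is to parameterize the hyperboloid $\varkappa_s$ by $x\in\R^3$ via $x\mapsto(\sqrt{s^2+|x|^2},x)$, so that the pullback $\psi(x):=\phi(\sqrt{s^2+|x|^2},x)$ is a function on $\R^3$ and $\|\phi\|_{L^2_{\varkappa_s}}$ is exactly the Euclidean $L^2(dx)$ norm of $\psi$. Rather than reducing to a single reference point via a Lorentz boost—which on its own does not generate the $t^{3/2}$ weight because the Euclidean measure $dx$ on $\varkappa_s$ is not Lorentz-invariant—the weight will come from a scale-adapted local Sobolev embedding whose radius is chosen to match $t_0$ at the point under consideration.

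Fix an arbitrary $x_0\in\R^3$, set $t_0:=\sqrt{s^2+|x_0|^2}$, and work on the Euclidean ball $B:=B_{t_0/4}(x_0)\subset\R^3$. A direct check—splitting into the cases $|x_0|\ge t_0/2$ (so $|x|\sim |x_0|\sim t_0$ on $B$, staying away from the origin) and $|x_0|<t_0/2$ (which forces $t_0\sim s$ and hence $t\sim t_0$ throughout $B$)—shows that $t\sim t_0$ uniformly on $B$. Next, from $L_a=t\underline{\p}_a$ one has the identities
\[
\p_{x_a}\psi=\underline{\p}_a\phi=\f{L_a\phi}{t},\qquad
\p_{x_a}\p_{x_b}\psi=\f{L_aL_b\phi}{t^2}-\f{x_a}{t^3}\,L_b\phi,
\]
and combined with $|x|\le t$ they yield the pointwise bounds
$|\p_x\psi|\ls t_0^{-1}|L\phi|$ and $|\p_x^2\psi|\ls t_0^{-2}(|L^2\phi|+|L\phi|)$ on $B$.

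Finally, the rescaled Euclidean embedding $H^2(B_R)\hookrightarrow L^\infty(B_R)$ with $R=t_0/4$ gives
\[
|\psi(x_0)|\ls R^{-3/2}\|\psi\|_{L^2(B)}+R^{-1/2}\|\p_x\psi\|_{L^2(B)}+R^{1/2}\|\p_x^2\psi\|_{L^2(B)},
\]
and plugging in the previous step, every term on the right collapses to $t_0^{-3/2}\sum_{|I|\le 2}\|L^I\phi\|_{L^2(B)}$. Multiplying by $t_0^{3/2}$ and using $\|\cdot\|_{L^2(B)}\le\|\cdot\|_{L^2_{\varkappa_s}}$ produces the desired pointwise bound at $(t_0,x_0)$, and taking the supremum over $x_0$ completes the argument. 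The only real subtlety is establishing $t\sim t_0$ on $B$ uniformly across the two regimes $|x_0|\lesssim s$ and $|x_0|\gg s$; once this is in place, the Sobolev scaling factor $R^{3/2}=(t_0/4)^{3/2}$ and the gain of $t^{-|I|}$ in converting $\p_x^I\psi$ to $L^I\phi$ combine automatically to produce exactly the $t^{3/2}$ weight on the left-hand side.
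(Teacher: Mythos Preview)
The paper does not supply its own proof of this lemma; it simply cites Proposition~5.1.1 of LeFloch--Ma \cite{L-Ma}. Your argument is correct and is essentially the standard proof found in that reference: parameterize $\varkappa_s$ by $x\mapsto(\sqrt{s^2+|x|^2},x)$, use the identity $\partial_{x_a}\psi=t^{-1}L_a\phi$ (and its iterate), and apply a scale-adapted $H^2\hookrightarrow L^\infty$ embedding on a ball of radius comparable to $t_0$. One minor remark: the case split for establishing $t\sim t_0$ on $B$ is not really needed, since from $|t^2-t_0^2|=\big||x|^2-|x_0|^2\big|\le \tfrac{t_0}{4}(|x|+|x_0|)\le \tfrac{9}{16}t_0^2$ one gets $\tfrac{\sqrt{7}}{4}t_0\le t\le \tfrac{5}{4}t_0$ directly, uniformly in $x_0$.
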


\begin{lem}\label{Y-02}
For any sufficiently smooth function $\phi$ supported in  $\mathcal{K}$ and $|I|\le N$, it holds
\begin{equation} \label{commutation}
\begin{aligned}
L^I(\partial_\alpha \phi)&=\sum_{|I^'|\leq |I|}\sum_{\alpha'=0}^{3}c_{\alpha\alpha' }^{I'I}\partial_{\alpha'}L^{I'}\phi,
\end{aligned}
\end{equation}
where $c_{\alpha\alpha' }^{I'I} \in \mathbb{Z}$ are some suitable constants.
\end{lem}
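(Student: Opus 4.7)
The plan is to establish the identity by induction on $|I|$, after first computing the basic first-order commutators $[L_a,\partial_\alpha]$ explicitly.

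\textbf{Base case $|I|=0$:} Here the statement is trivial, with $c^{I'I}_{\alpha\alpha'}=\delta_{\alpha\alpha'}$ and $I'=I=0$. For the first nontrivial case $|I|=1$, I would compute directly from the definition $L_a=x_a\partial_t+t\partial_a$:
\begin{equation*}
[L_a,\partial_t]=[x_a\partial_t+t\partial_a,\partial_t]=-\partial_a,\qquad [L_a,\partial_b]=[x_a\partial_t+t\partial_a,\partial_b]=-\delta_{ab}\partial_t.
\end{equation*}
Hence $L_a\partial_\alpha\phi=\partial_\alpha L_a\phi+[L_a,\partial_\alpha]\phi$, and $[L_a,\partial_\alpha]\phi$ is always $\pm$ a single first-order partial of $\phi$, which is already of the required form with $|I'|=0\le |I|$.

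\textbf{Inductive step:} Assume the claim holds for all multi-indices of length $\le k-1$, and take $|I|=k$. Write $L^I=L_aL^{\tilde I}$ for some $a\in\{1,2,3\}$ and $|\tilde I|=k-1$. Then
\begin{equation*}
L^I(\partial_\alpha\phi)=L_a\bigl(L^{\tilde I}(\partial_\alpha\phi)\bigr)=L_a\Bigl(\sum_{|I''|\le k-1}\sum_{\alpha''=0}^{3}c^{I''\tilde I}_{\alpha\alpha''}\partial_{\alpha''}L^{I''}\phi\Bigr)
\end{equation*}
by the induction hypothesis applied to $L^{\tilde I}$. It remains to move $L_a$ past each $\partial_{\alpha''}$. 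Using the base-case identity,
\begin{equation*}
L_a\partial_{\alpha''}L^{I''}\phi=\partial_{\alpha''}L_aL^{I''}\phi+[L_a,\partial_{\alpha''}]L^{I''}\phi,
\end{equation*}
where $L_aL^{I''}$ is a product of $L$-fields of order $|I''|+1\le k=|I|$, and $[L_a,\partial_{\alpha''}]L^{I''}\phi=\pm\partial_{\alpha'''}L^{I''}\phi$ for some $\alpha'''$, which is of the desired form with $|I''|\le k-1\le|I|$. Collecting terms yields the claimed expansion with integer coefficients $c^{I'I}_{\alpha\alpha'}\in\mathbb{Z}$.

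\textbf{Remarks on obstacles:} There is no real analytic obstacle here; the whole proof reduces to the base-case commutator computation together with bookkeeping of indices in the induction. The only point requiring minor care is verifying that every term produced in the inductive step has total $L$-order bounded by $|I|$, which follows because each pass of $L_a$ through a $\partial_{\alpha''}$ either keeps the $L$-order the same (in the commutator term) or transfers the $L_a$ onto the string $L^{I''}$ (in the "main" term), in both cases respecting the bound $|I'|\le|I|$. Integrality of the constants $c^{I'I}_{\alpha\alpha'}$ is automatic since the only numerical coefficients introduced in the recursion are $\pm 1$ and $\pm\delta_{ab}$.
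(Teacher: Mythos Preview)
Your proof is correct and follows essentially the same approach as the paper: both rely on the first-order commutator identities $[L_a,\partial_t]=-\partial_a$ and $[L_a,\partial_b]=-\delta_{ab}\partial_t$, with the paper simply citing ``direct computations'' where you have written out the induction on $|I|$ explicitly.
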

\begin{proof}
Note that
\begin{equation}\label{Y-03}
\begin{aligned}
\left[ {\partial_0,L_a} \right]&=\partial_a,\quad 1\leq a\leq 3,\\
[\partial_b,L_a]&=\delta_{ab}\partial_0,\quad 1\leq a,b\leq 3.
\end{aligned}
\end{equation}
Then \eqref{commutation} comes from \eqref{Y-03} and direct computations.
\end{proof}

\begin{lem}\label{Y-04}
For any sufficiently smooth function $\phi$ supported in  $\mathcal{K}$, then for $s=\sqrt{t^2-|x|^2}\ge s_0,~ |I|+|J|\leq N-2,$
it holds that
\begin{equation} \label{Y-05}
\sum_{\alpha=0}^3\|\frac{s}{t}\partial_\alpha \partial^IL^J\phi\|_{L_{\varkappa_s}^\infty}+\delta\| \partial^IL^J\phi\|_{L_{\varkappa_s}^\infty} \lesssim s^{-\frac{3}{2}}\sum_{|I'|+|J'|\leq N}\mathcal{E}^\delta (s,\partial^{I'}L^{J'}\phi)^{1/2}.
\end{equation}
\end{lem}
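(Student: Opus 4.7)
The plan is to couple the hyperboloidal Sobolev inequality of Lemma~\ref{Y-01} with the commutation formula of Lemma~\ref{Y-02} and the $L^2$-type bound \eqref{Y0-4}. I will focus on the first summand $\sum_\alpha\|\frac{s}{t}\partial_\alpha \partial^IL^J\phi\|_{L^\infty_{\varkappa_s}}$; the second summand $\delta\|\partial^IL^J\phi\|_{L^\infty_{\varkappa_s}}$ is handled analogously (in fact more simply, since $L$ annihilates the constant $\delta$, so only the $\partial^IL^J\phi$ factor carries $L$-derivatives).

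Set $\psi_\alpha:=\frac{s}{t}\partial_\alpha \partial^IL^J\phi$. Since $t\ge s$ on $\varkappa_s$, I would first observe $\sup_{\varkappa_s}|\psi_\alpha|\le s^{-3/2}\sup_{\varkappa_s}t^{3/2}|\psi_\alpha|$, and then apply Lemma~\ref{Y-01} to obtain
$$\sup_{\varkappa_s}t^{3/2}|\psi_\alpha|\lesssim \sum_{|K|\le 2}\|L^K\psi_\alpha\|_{L^2_{\varkappa_s}}.$$
Leibniz splits $L^K\psi_\alpha$ into products of $L^{K_1}(s/t)$ and $L^{K_2}(\partial_\alpha\partial^IL^J\phi)$. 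A short direct computation, using $L_a s \equiv 0$, gives $L_a(1/t)=-x_a/t^2$ and $L_bL_a(1/t)=2x_ax_b/t^3-\delta_{ab}/t$, so with $|x|\le t$ inside $\mathcal{K}$ one obtains the uniform bound $|L^{K_1}(s/t)|\lesssim s/t$ for $|K_1|\le 2$.

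For the remaining factor I would invoke \eqref{commutation} iteratively (once per $\partial$) to peel all the partial derivatives to the left, landing on
$$L^{K_2}(\partial_\alpha\partial^IL^J\phi)=\sum_{\alpha'}\sum_{\substack{|I'|=|I|\\ |J'|\le |J|+|K_2|}} c\cdot \partial_{\alpha'}\partial^{I'}L^{J'}\phi,$$
so that $|I'|+|J'|\le |I|+|J|+2\le N$ by hypothesis. Combining these two estimates produces $\|L^K\psi_\alpha\|_{L^2_{\varkappa_s}}\lesssim \sum_{|I'|+|J'|\le N}\sum_{\alpha'}\|\frac{s}{t}\partial_{\alpha'}\partial^{I'}L^{J'}\phi\|_{L^2_{\varkappa_s}}$, and \eqref{Y0-4} converts this into $\sum_{|I'|+|J'|\le N}\mathcal{E}^\delta(s,\partial^{I'}L^{J'}\phi)^{1/2}$, which is the required bound after multiplying by $s^{-3/2}$. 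I expect the only delicate point to be the derivative bookkeeping: a careless commutation could push the total order to $N+1$, but the clean ``one $L$ for one $\partial$'' exchange supplied by \eqref{commutation}, together with the fact that the extra $\partial_{\alpha'}$ on the outside is precisely the one absorbed by the $\frac{s}{t}\partial_{\alpha'}$ weight in \eqref{Y0-4}, keeps the budget tight at exactly $N$.
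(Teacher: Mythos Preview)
Your proposal is correct and follows essentially the same route as the paper: apply Lemma~\ref{Y-01} to $\frac{s}{t}\partial_\alpha\partial^IL^J\phi$, Leibniz-split the resulting $L^K$, bound $|L^{K_1}(s/t)|\lesssim s/t$ by direct computation, commute the remaining $L^{K_2}$ through the $\partial$'s via Lemma~\ref{Y-02} (keeping the total order $\le N$), and close with \eqref{Y0-4} and $t\ge s$; the $\delta$-term is handled in the same way without the $s/t$ weight. The only cosmetic difference is that you compute $L^{K_1}(s/t)$ via $L_as=0$ and $L_a(1/t)$, whereas the paper computes $L_a(s/t)$ directly---both give the same bound.
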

\begin{proof}
It follows from Lemma \ref{Y-01} that
\begin{equation}\label{CY-1}
\sup_{\varkappa_s}t^{3/2}|\frac{s}{t}\partial_{\alpha} \partial^IL^J\phi|\lesssim \sum_{|K|\leq 2}\|
L^K\left(\frac{s}{t}\partial_{\alpha} \partial^IL^J\phi\right)\|_{L_{\varkappa_s}^2}.
\end{equation}
Note that
\begin{equation*}
L^K\left(\frac{s}{t}\partial_{\alpha}  \partial^IL^J\phi\right)=\sum\limits_{K_1+K_2=K} {L^{K_1}\left(\frac{s}{t}\right)L^{K_2}(\partial_{\alpha}  \partial^IL^J\phi)}
\end{equation*}
and
\begin{equation*}
L_a\left(\frac{s}{t}\right)=-\frac{x_a}{t}\frac{s}{t},\quad
L_bL_a\left(\frac{s}{t}\right)=2\frac{x_ax_b}{t^2}\frac{s}{t}-\delta_{ab}\left(\frac{s}{t}\right),\quad a,b=1,2,3.
\end{equation*}
Due to $\left|\frac{x_a}{t}\right|\leq 1$ by $|x|+1\leq t,$
then
$$\left|L^{K_1}\left(\frac{s}{t}\right)\right|\lesssim \frac{s}{t}.$$
Together with \eqref{Y-03}, this yields
\begin{equation}\label{CY-2}
\sum_{|K|\leq 2}\|L^K\left(\frac{s}{t}\partial_{\alpha}  \partial^IL^J\phi\right)\|_{L_{\varkappa_s}^2}\lesssim \sum_{|K_2|\leq 2}\|\frac{s}{t}L^{K_2}\left(\partial_{\alpha}  \partial^IL^J\phi\right)\|_{L_{\varkappa_s}^2}\lesssim \sum_{|I'|+|J'|\leq N}\sum_{\beta=0}^3\|\frac{s}{t}
\partial_{\beta}  \partial^{I'}L^{J'}\phi\|_{L_{\varkappa_s}^2}.
\end{equation}
On the other hand, similarly to the derivations of \eqref{CY-1} and \eqref{CY-2}, we have
\begin{equation}\label{CY-3}
\sup_{\varkappa_s}t^{3/2}| \partial^IL^J\phi|\lesssim \sum_{|K|\leq 2}\|
L^K\left( \partial^IL^J\phi\right)\|_{L_{\varkappa_s}^2}\lesssim \sum_{|I'|+|J'|\leq N}\| \partial^{I'}L^{J'}\phi\|_{L_{\varkappa_s}^2}.
\end{equation}
Due to $s\le t\le \frac{s^2+1}{2}$
for  $(t,x)\in \varkappa_s$ and $s\ge s_0$, then \eqref{Y-05} is shown by \eqref{CY-1}-\eqref{CY-3}.
\end{proof}

\begin{lem}\label{Y-06}
For any sufficiently smooth function $\phi$ supported in  $\mathcal{K}$, we have that for $s=\sqrt{t^2-|x|^2}\ge s_0$, $|I|+|J|\leq N-2$,
\begin{equation}\label{Y-07}
\sum_{\alpha=0}^3\|\frac{t}{s}\partial_\alpha \partial^IL^J\phi\|_{L_{\varkappa_s}^\infty}\lesssim s^{-1} \sum_{|I'|+|J'|\leq N} \mathcal{E}^\delta (s,\partial^{I'}L^{J'}\phi)^{1/2}.
\end{equation}

\end{lem}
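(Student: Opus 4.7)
The plan is to recycle the Sobolev machinery from the proof of Lemma \ref{Y-04} rather than set it up again from scratch. First, I observe that the chain \eqref{CY-1}--\eqref{CY-3} from that proof actually yields, for each fixed $s\ge s_0$, the pointwise bound
\begin{equation*}
\Bigl|\frac{s}{t}\partial_\alpha \partial^IL^J\phi(t,x)\Bigr| \lesssim t^{-3/2}\sum_{|I'|+|J'|\leq N}\mathcal{E}^\delta(s,\partial^{I'}L^{J'}\phi)^{1/2},\qquad (t,x)\in\varkappa_s,
\end{equation*}
because the right-hand side is independent of $(t,x)$ and arises as a uniform bound on $t^{3/2}|\,\cdot\,|$ over the hyperboloid $\varkappa_s$.

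The main step is then the algebraic identity $\frac{t}{s}\partial_\alpha = \bigl(\frac{t}{s}\bigr)^{\!2}\!\cdot\!\frac{s}{t}\partial_\alpha$. Multiplying the pointwise bound above by $(t/s)^2$ produces
\begin{equation*}
\Bigl|\frac{t}{s}\partial_\alpha \partial^IL^J\phi(t,x)\Bigr| \lesssim \frac{t^{1/2}}{s^2}\sum_{|I'|+|J'|\leq N}\mathcal{E}^\delta(s,\partial^{I'}L^{J'}\phi)^{1/2}.
\end{equation*}
To finish, I would invoke the geometric constraint of $\mathcal{K}$: from $t \ge |x|+1$ one gets $s^2 = t^2-|x|^2 \ge 2t-1$, hence $t \le (s^2+1)/2 \le s^2$ for $s \ge s_0 = 2$. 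Therefore $t^{1/2}/s^2 \le s/s^2 = s^{-1}$ uniformly on $\varkappa_s$, and taking the supremum yields \eqref{Y-07}.

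No serious obstacle arises; the lemma is essentially a reweighting of Lemma \ref{Y-04} that trades a pointwise factor $(t/s)^2$ for an $s^{1/2}$ loss in the decay rate. The pitfall to avoid is applying Sobolev directly to $\frac{t}{s}\partial_\alpha \partial^IL^J\phi$: that route would force one to bound $\|(t/s)L^K(\partial_\alpha \partial^IL^J\phi)\|_{L^2_{\varkappa_s}}$ by the energy, but the hyperboloidal energy \eqref{equivalent energy} only controls $(s/t)$-weighted derivatives in $L^2$, so the direct attempt cannot deliver the $\delta$-free estimate asserted in \eqref{Y-07}.
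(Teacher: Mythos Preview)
Your proposal is correct and follows essentially the same route as the paper: write $\frac{t}{s}\partial_\alpha=(t/s)^2\cdot\frac{s}{t}\partial_\alpha$, invoke the hyperboloidal Sobolev bound \eqref{CY-1}--\eqref{CY-2} for the $(s/t)$-weighted derivative, and then absorb the extra $(t/s)^2$ factor using $t\le (s^2+1)/2\le s^2$ on $\varkappa_s$ to convert $t^{-3/2}(t/s)^2=t^{1/2}/s^2$ into $s^{-1}$. The only cosmetic difference is the order of the steps; the paper multiplies by $(t/s)^2$ first and then applies Sobolev, while you do the reverse.
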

\begin{proof}
For $(t,x)\in\varkappa_s,$  it follows from  Lemma \ref{Y-01} and \eqref{CY-2} that
\begin{equation*}
\begin{split}
\frac{t}{s}t^{3/2}|\partial_\alpha  \partial^IL^J\phi(t,x)|
&=\left(\frac{t}{s}\right)^2t^{3/2}|\frac{s}{t}\partial_\alpha  \partial^IL^J\phi(t,x)|
\lesssim \left(\frac{t}{s}\right)^2\sum_{|K|\leq 2}\|
L^K\left( \frac{s}{t} \partial_\alpha  \partial^IL^J\phi\right)\|_{L_{\varkappa_s}^2}
\\
&\lesssim \left(\frac{t}{s}\right)^2\sum_{|I'|+|J'|\leq N}\sum_{\beta=0}^3\|\frac{s}{t}
\partial_\beta  \partial^{I'}L^{J'}\phi\|_{L_{\varkappa_s}^2}.
\end{split}
\end{equation*}
This yields
\begin{equation*}
\begin{aligned}
\frac{t}{s}|\partial_\alpha  \partial^IL^J\phi(t,x)|&\lesssim \frac{t^{\frac{1}{2}}}{s^2}\sum_{|I'|+|J'|\leq N}\sum_{\beta=0}^3\|\frac{s}{t}
\partial_\beta  \partial^{I'}L^{J'}\phi\|_{L_{\varkappa_s}^2}
\lesssim s^{-1}\sum_{|I'|+|J'|\leq N}\sum_{\beta=0}^3\|\frac{s}{t}
\partial_\beta  \partial^{I'}L^{J'}\phi\|_{L_{\varkappa_s}^2}.
\end{aligned}
\end{equation*}
Then \eqref{Y-07} is shown.
\end{proof}

\section{Local existence}
At first, we establish the local existence of solution $u$ to \eqref{scaling equation}
in $[2,T_0]\times\mathbb{R}^3$ ($T_0>5/2$ is a large fixed constant) and derive the smallness orders
of $\dl$
for the derivatives of $u$.
Although the proof procedure is standard, due to the appearance of  small factor $\dl$ and the restriction
of $\nu>-\f12$, we still give the details for reader's convenience.

\begin{prop}
 For $\nu>-1/2$ ,  problem \eqref{scaling equation} has a local solution
$u\in C([2,T_0], H^{N+1}) \cap C^1([2,T_0], H^{N} )$
and $u$ satisfies
\begin{equation}\label{L-S}
\|\partial^Iu(t,x)\|_{L^\infty}\leq C\delta^{\nu +1},
\end{equation}
where $|I|\leq N-1$, and $C>0$ is a generic constant.
\end{prop}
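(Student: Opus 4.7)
The plan is a standard Picard iteration adapted to track carefully the powers of $\delta$. I set $u^{(0)}\equiv 0$ and, for each $n\ge 0$, define $u^{(n+1)}$ as the solution of the linear Klein--Gordon problem
\begin{equation*}
\square u^{(n+1)}+\delta^2 u^{(n+1)}=F_\delta(u^{(n)},\partial u^{(n)}),\qquad u^{(n+1)}(2,x)=\delta^{\nu+1}u_0(x),\quad \partial_t u^{(n+1)}(2,x)=\delta^{\nu+1}u_1(x).
\end{equation*}
Each such linear equation with mass $\delta^2$ admits the energy $\mathcal{E}(t,v):=\|\partial v\|_{L^2}^2+\delta^2\|v\|_{L^2}^2$ satisfying $\tfrac{d}{dt}\mathcal{E}(t,v)^{1/2}\ls\|f\|_{L^2}$ whenever $\square v+\delta^2 v=f$. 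Commuting $\partial^I$ with $\square+\delta^2$ (which is trivial since the operator has constant coefficients) and summing over $|I|\le N$ yields the higher-order energy inequality for $u^{(n+1)}$ driven by $\sum_{|I|\le N}\|\partial^I F_\delta(u^{(n)},\partial u^{(n)})\|_{L^2}$. By finite propagation speed each $u^{(n)}$ has support in a fixed compact subset of $\mathbb{R}^3$ for $t\in[2,T_0]$, which legitimates the unweighted $L^2$ bookkeeping.

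The core observation is the estimate of the nonlinear source. The rescaled coefficients $\tilde g^{jkl}(\delta)=g^{jkl}\delta^{2-n_j-n_k-n_l}$ have magnitude at most $\delta^{-1}$, the worst case being realized when all three of $j,k,l$ are nonnegative (the purely cubic-gradient term $(\partial u)^3$). On a bootstrap ball of the form $\|\partial^I u^{(n)}\|_{L^\infty}\le C_0\delta^{\nu+1}$ for $|I|\le N-1$ together with $\|\partial^I u^{(n)}\|_{L^2}\le C_0\delta^{\nu+1}$ for $|I|\le N+1$, a Moser-type product rule then gives
\begin{equation*}
\sum_{|I|\le N}\|\partial^I F_\delta(u^{(n)},\partial u^{(n)})\|_{L^2}\ls \delta^{-1}\cdot(C_0\delta^{\nu+1})^2\cdot C_0\delta^{\nu+1}=C_0^3\,\delta^{3\nu+2}.
\end{equation*}

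Inserting this into the energy inequality and integrating over $[2,t]\subset[2,T_0]$ (with $T_0$ independent of $\delta$) yields
\begin{equation*}
\Bigl(\sum_{|I|\le N}\mathcal{E}(t,\partial^I u^{(n+1)})\Bigr)^{1/2}\le C\delta^{\nu+1}+CT_0 C_0^3\delta^{3\nu+2}= C\delta^{\nu+1}\bigl(1+T_0 C_0^3\delta^{2\nu+1}\bigr).
\end{equation*}
Since $\nu>-\tfrac12$, the exponent $2\nu+1$ is strictly positive, so the parenthesis is bounded by $2$ once $\delta$ is small. Integrating $\partial_t u^{(n+1)}$ in $t$ recovers $L^2$ control of $u^{(n+1)}$ itself on $[2,T_0]$, and the Sobolev embedding $H^2(\mathbb{R}^3)\hookrightarrow L^\infty(\mathbb{R}^3)$ then delivers $\|\partial^I u^{(n+1)}\|_{L^\infty}\le C_0\delta^{\nu+1}$ for $|I|\le N-1$ (using $N\ge 4$), which closes the bootstrap. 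Convergence of the sequence $\{u^{(n)}\}$ to the desired local solution follows from a standard contraction estimate in a weaker norm such as $H^{N-1}$, again exploiting $\delta^{2\nu+1}\ll 1$.

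The main obstacle is the $\delta^{-1}$ coefficient that multiplies the cubic-gradient term after the rescaling. Three $L^\infty$ factors of size $\delta^{\nu+1}$ must overcome this loss; the balance $\delta^{-1}\cdot\delta^{3(\nu+1)}=\delta^{3\nu+2}$ closes against the a priori size $\delta^{\nu+1}$ precisely when $2\nu+1>0$, which is the hypothesis $\nu>-\tfrac12$ of the proposition and matches the critical threshold announced in Theorem \ref{mainthm}.
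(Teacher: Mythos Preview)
Your proposal is correct and follows essentially the same route as the paper: a Picard iteration with standard energy estimates on $[2,T_0]$, tracking the powers of $\delta$ and closing the bootstrap precisely because $2\nu+1>0$. The only cosmetic differences are that the paper moves the mass term $\delta^2 u$ to the right-hand side (using $\Box$ rather than $\Box+\delta^2$ as the linear operator in the iteration) and proves convergence of the iterates in the full $H^{N+1}\times H^N$ norm via the factorial-series bound rather than by a contraction in a weaker norm.
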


\begin{proof}
Set $u^{(-1)}\equiv 0$.  Define $u^{(m)}$ ($m=0,1,...,$) inductively by
\begin{equation}\label{L-E}
\begin{aligned}
\left\{ \begin{array}{l}
\Box u^{(m)}=f_\delta(u^{(m-1)},\partial u^{(m-1)}),\\
u^{(m)}\left( {2,x} \right) = {\delta ^{\nu  + 1}}{u_0}\left( x \right),{\rm{ }}{\partial _t}u^{(m)}
\left( {2,x} \right)
= {\delta ^{\nu +1} }{u_1}\left( x \right),
\end{array} \right.
\end{aligned}
\end{equation}
where $f_\delta(u,\partial u)=-\delta^2u+F_\delta(u,\partial u)$.

At first, we claim that for small $\delta>0$, there exists a fixed constant $A>0$ such that
for $m\in \mathbb{N} \cup \left\{ { - 1} \right\}$ and $t\in [2, T_0]$,
\begin{equation} \label{L-A}
\begin{aligned}
A_{m}(t):=(\| u^{(m)}(t,x)\|_{H^{N+1}}+\| \partial u^{(m)}(t,x)\|_{H^{N}})\leq A\delta^{\nu +1},
\end{aligned}
\end{equation}
where $A$ is at least double the corresponding bound which is derived from $(u^{(0)}, \p u^{(0)})(2,x)$.

Obviously, $u^{(-1)}$ satisfies \eqref{L-A}. Assume $A_{m-1}(t)\leq A\delta^{\nu +1}$ for all $t\in [2,T_0]$. Now we show that $A_{m}(t)$ fulfills \eqref{L-A} for small $\dl$ and $t\in [2,T_0]$.
Note that for $ |\alpha|\leq N$,
\begin{equation}\label{L-high-E}
\square \partial_x^\alpha u^{(m)}=\partial_x^\alpha f_\delta(u^{(m-1)},\p u^{(m-1)})
\end{equation}
and
\begin{equation}\label{L-05}
\begin{aligned}
&\|\partial_x^\alpha f_\delta(u^{(m-1)},\p u^{(m-1)})\|_{L^2}\\
&\leq
C \bigg(\sup_{1\leq |\beta|\leq {|\alpha|} }|\nabla_{u,\p u}^{\beta}f_\delta(u^{(m-1)},\p u^{(m-1)})|\|(u^{(m-1)},\p u^{(m-1)})\|_{L^\infty}^{|\beta|-1}\bigg)
\sup_{|\beta|=|\alpha| }\|\partial_x^\beta (u^{(m-1)},\p u^{(m-1)})\|_{L^2}\\
&\leq C(\delta^{-1}A_{m-1}^2(t)+\delta^2)A_{m-1}(t)
\end{aligned}
\end{equation}
by standard Sobolev's interpolation inequality.

Then applying the standard energy estimate in Chapter 6 of\cite{H} for the linear wave equation,
one has
\begin{equation} \label{L-06}
\begin{aligned}
\sum\limits_{\scriptstyle\left| \alpha  \right| \le N\hfill\atop
\scriptstyle\left| \beta  \right| = 1\hfill} \| \partial^\beta\partial_x^{\alpha}u^{(m)}(t,x)\|_{L^2}\leq 2 (\sum\limits_{\scriptstyle\left| \alpha  \right| \le N \hfill\atop
\scriptstyle\left| \beta  \right| = 1\hfill}\|\partial^\beta\partial_x^{\alpha}u^{(m)}(2,x)\|_{L^2}
+\int_{2}^{t}\sum_{|\alpha|\leq N}\|\partial_x^\alpha f_\delta(u^{(m-1)},\p u^{(m-1)})\|_{L^2} dt').
\end{aligned}
\end{equation}
Due to $\|u^{(m)}(t,x)\|_{L^2}\leq \|u^{(m)}(2,x)\|_{L^2}+\int_{2}^{t}\|\partial_{t'}u^{(m)}(t',x)\|_{L^2}dt'$,
then it follows from \eqref{L-06} that
\begin{equation}\label{L-07}
\begin{aligned}
\sum_{|\beta|\leq 1}\sum_{|\alpha|\leq N}&\|\partial^\beta\partial_x^{\alpha}u^{(m)}(t,x)\|_{L^2}\leq 2\left({\sum_{|\beta|\leq 1}\sum_{|\alpha|\leq N}\|\partial^\beta\partial_x^{\alpha}u^{(m)}(2,x)\|_{L^2} }\right.
\\
&~~~\left.
{+\int_{2}^{t}\left(\sum_{|\alpha|\leq N}\|\partial_x^\alpha f_\delta(u^{(m-1)},\p u^{(m-1)})\|_{L^2}+\|\partial_{t'}u^{(m)}(t',x)\|_{L^2}\right)dt'}\right).
\end{aligned}
\end{equation}
Substituting \eqref{L-05} into \eqref{L-07} yields that for $t\in [2, T_0]$,
\begin{equation*}
\begin{aligned}
\sum_{|\beta|\leq 1}\sum_{|\alpha|\leq
N}\|\partial^\beta\partial_x^{\alpha}u^{(m)}(t,x)\|_{L^2}
&\leq C\left(A_m(2)
+\int_{2}^{t}
\left(C\left(\delta^{-1}A_{m-1}^2(t')+\delta^2\right)A_{m-1}(t')
+A_m(t')\right)dt'\right)\\
&\leq C\left(A_m(2)+A(T_0-2)\delta^{\nu +1}\max\left\{A^2\delta^{2\nu +1},\delta^2\right\}
+\int_{2}^{t}A_m(t')dt' \right).\\
\end{aligned}
\end{equation*}
By \eqref{L-A} and Gronwall's inequality, we arrive at
\begin{equation*}
\begin{aligned}
A_m(t)&\leq Ce^{C(T_0-2)}\left(\delta^{\nu +1}+A(T_0-2)\delta^{\nu +1}\max\left\{A^2\delta^{2\nu +1},\delta^2\right\}\right)\\
&\leq Ce^{C(T_0-2)}
\left(1+A(T_0-2)\max \left\{A^2\delta^{2\nu +1},\delta^2 \right\}\right)\delta^{\nu +1}.
\end{aligned}
\end{equation*}
Choose the fixed constant $A>0$  such that $A>Ce^{C(T_0-2)}
\left(1+A(T_0-2)\max \left\{A^2\delta^{2\nu +1},\delta^2 \right\}\right)$ for small $\dl$ and $\nu>-\f12$.
Then \eqref{L-A} holds for any $m\in \mathbb{N}\cup \left\{-1\right\}.$

Set
\begin{equation}\label{L-08}
C_m(t):=\|u^{(m)}(t,x)-u^{(m-1)}(t,x)\|_{H^{N+1}}+\|\partial u^{(m)}(t,x)-\partial u^{(m-1)}(t,x)\|_{H^{N}}.
\end{equation}

Note that
\begin{equation*}
\left\{ \begin{array}{l}
\Box(u^{(m)}-u^{(m-1)})=f_\delta(u^{(m-1)},\p u^{(m-1)})-f_\delta(u^{(m-2)},\p u^{(m-2)}),\\
(u^{(m)}-u^{(m-1)})(2,x)=0,~\partial_t(u^{(m)}-u^{(m-1)})(2,x)=0.
\end{array} \right.
\end{equation*}
Then it follows from the standard energy estimate that for $|\alpha|\leq N$,
\begin{equation} \label{L-09}
\begin{aligned}
&\sum_{|\beta|\leq 1}\sum_{|\alpha|\leq N}\|\partial^\beta\partial_x^{\alpha}(u^{(m)}-u^{(m-1)})(t,x)\|_{L^2}
\\
\leq &C\int_{2}^{t}\left(\sum_{|\alpha|\leq N}\|\partial_x^\alpha (f_\delta(u^{(m-1)},\p u^{(m-1)})-f_\delta(u^{(m-2)},\p u^{(m-2)}))\|_{L^2}+\|\partial_{t'}(u^{(m)}-u^{(m-1)})(t',x)\|_{L^2}\right)dt'.
\end{aligned}
\end{equation}
Note that
\begin{equation*}
\begin{aligned}
&\left\|\partial_x^\alpha \left(f_\delta(u^{(m-1)},\p u^{(m-1)})-f_\delta(u^{(m-2)},\p u^{(m-2)})\right)\right\|_{L^2}
\\
\leq& \sum\limits_{j,k,l =  - 1}^3 {\t g}^{jkl}(\dl)
\left( {{{\left\| {\partial _x^\alpha \left( {( {{\partial _j}{u^{(m-1)}} - {\partial _j}{u^{(m-2)}}}){\partial _k}{u^{(m-1)}}{\partial _l}{u^{(m-1)}}} \right)} \right\|}_{{L^2}}}
 } \right.\\
&\left.{  + {{\left\| {\partial _x^\alpha \left( {( {{\partial _k}{u^{(m-1)}} - {\partial _k}{u^{(m-2)}}}){\partial _j}{u^{(m-2)}}{\partial _l}{u^{(m-1)}}} \right)} \right\|}_{{L^2}}}+ {{\left\| {\partial _x^\alpha \left( {( {{\partial _l}{u^{(m-1)}}
- {\partial _l}{u^{(m-2)}}}){\partial _j}{u^{(m-2)}}{\partial _k}{u^{(m-2)}}} \right)} \right\|}_{{L^2}}}} \right)
\\
&+\dl^2\left\|\partial _x^\alpha\left(u^{(m-1)}-u^{(m-2)}\right)\right\|_{L^2}
\\
\leq &C\left(A^2\delta^{2\nu+1}+\dl^2\right)C_{m-1}(t),
\end{aligned}
\end{equation*}
where we have used standard Sobolev's interpolation inequality.
Then by \eqref{L-09}, one has that for $t\in [2, T_0]$,
\begin{equation*}
\begin{aligned}
&\sum_{|\beta|\leq 1}\sum_{|\alpha|\leq N}\|\partial^\beta\partial_x^{\alpha}(u^{(m)}-u^{(m-1)})(t,x)\|_{L^2}
\leq &C\int_{2}^{t}\left(\left(A^2\delta^{2\nu+1}+\dl^2\right)C_{m-1}(t')+C_m(t')\right)dt'.
\end{aligned}
\end{equation*}
Due to the definition \eqref{L-08} and Gronwall's lemma, then
\begin{equation*}
\begin{aligned}
C_m(t)&\leq  C\int_{2}^{t}\left(\left(A^2\delta^{2\nu+1}+\dl^2\right)C_{m-1}(t')+C_m(t')\right)dt'
\\
&\leq C \max \left\{ A^2\delta^{2\nu+1},\dl^2\right\}\int_{2}^{t} C_{m-1}(t')dt'\exp\left\{\int_{2}^{t}Cdt'\right\}
\\
&\leq C_{T_0,A,\delta} \int_{2}^{t} C_{m-1}(t')dt',
\end{aligned}
\end{equation*}
where $C_{T_0,A,\delta}=Ce^{C(T_0-2)}\max \left\{A^2\delta^{2\nu+1},\dl^2\right\}$. Thus, for $t\in [2, T_0]$,
 \begin{equation*}
\begin{aligned}
C_m(t)\leq C_{T_0,A,\delta}^{m-1}\int_{2}^{t}\cdots \int_{2}^{\tau_3}C_1(\tau_2)d\tau_2d\tau_3\cdots d\tau_m
\leq\frac{(C_{T_0,A,\delta} (T_0-2))^{m-1}}{ (m-1)!}\sup_{2\leq t\leq T_0}C_1(t).
\end{aligned}
\end{equation*}
This means that there exists $u\in C([2,T_0],H^{N+1})\cap C^1([2,T_0],H^{N})$ such that
$u^{(m)}\to u$ in $C([2,T_0],H^{N+1})\cap C^1([2,T_0],H^{N})$ and $u$ is a solution to problem \eqref{scaling equation}.
Moreover, $u$ satisfies \eqref{L-S} due to \eqref{L-A}.
\end{proof}

Based on the local existence in Proposition 3.1, when $s_0^*\le s=\sqrt{t^2-|x|^2} \le B$ and $B$ is a fixed constant with $s_0<B
\le\sqrt{2T_0+1}$,
we next give the estimate for the energy  $M_N^\delta(s)$ in \eqref{2-15}.

\begin{prop}
For $s_0^*\le s \le B$, it holds that for small $\dl$,
\begin{equation}
  M_N^\delta(s)=\sum_{|I|+|J|\leq N}\mathcal{E}^\delta(s,\partial^IL^Ju)^{\frac{1}{2}}\leq
  C{\delta ^{\nu  +1}}.
\end{equation}\label{L-10}
\end{prop}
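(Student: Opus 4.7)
The plan is to run a hyperboloidal energy estimate on $\partial^I L^J u$ for $|I|+|J|\le N$ over the bounded region $V_s$ (when $s_0^*\le s\le s_0$) or $G_s$ (when $s_0\le s\le B$), and then close by Gronwall. The whole region sits inside $\mathcal K\cap\{2\le t\le T_0\}$ because $s\le B\le\sqrt{2T_0+1}$, so the local solution from Proposition~3.1 is available throughout, with the pointwise bound \eqref{L-S} and the uniform $H^{N+1}\times H^N$ estimate of size $\delta^{\nu+1}$ on $(u,\partial u)$ at hand.

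Since the Lorentz boosts commute with $\square$ and $\partial$ commutes with $\square$, one has $[\partial^I L^J,\square]=0$, so $\partial^I L^J u$ solves
\begin{equation*}
\square(\partial^I L^J u)+\delta^2\,\partial^I L^J u=\partial^I L^J F_\delta(u,\partial u).
\end{equation*}
Multiplying by $2\partial_t(\partial^I L^J u)$ and applying Stokes' theorem on $V_s$ produces the identity
\begin{equation*}
\mathcal E^\delta(s,\partial^I L^J u)=(\text{flux on } D_s)+(\text{flux on }\{t=|x|+1\})+2\!\int_{V_s}\!\partial_t(\partial^I L^J u)\,\partial^I L^J F_\delta\,dxdt.
\end{equation*}
The null-boundary flux on $\{t=|x|+1\}$ is non-negative and will be discarded. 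The flat-boundary flux on $D_s\subset\{t=2\}$ is estimated by expanding $L^J=\sum P(t,x)\partial^\alpha$ with $|\alpha|\le|J|$, using the equation to trade any $\partial_t^{\ge 2}$ for $\Delta-\delta^2+F_\delta$, and inserting the initial data $u(2,x)=\delta^{\nu+1}u_0$, $\partial_t u(2,x)=\delta^{\nu+1}u_1$; this gives a contribution of order $\delta^{2(\nu+1)}$. For the nonlinear integral, a Leibniz expansion of $\partial^I L^J F_\delta$ lets us place the $L^\infty$ bound \eqref{L-S} on two factors of order $\le N-1$ (possible since $N\ge 4$), while the remaining factor is absorbed into $M_N^\delta$; combined with $|\widetilde g^{jkl}(\delta)|\lesssim\delta^{-1}$ and the boundedness of $t/s'$ on our region, this yields
\begin{equation*}
\int_{V_s}\bigl|\partial_t(\partial^I L^J u)\,\partial^I L^J F_\delta\bigr|\,dxdt\lesssim\delta^{2\nu+1}\!\int_{s_0^*}^{s}\!M_N^\delta(s')^2\,ds'.
\end{equation*}
Summing over $|I|+|J|\le N$ and invoking Gronwall on the compact interval $[s_0^*,B]$ concludes $M_N^\delta(s)\lesssim\delta^{\nu+1}$.

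The part I expect to be the main obstacle is handling the boundary flux on $D_s$: because $\varkappa_{s_0^*}\cap\mathcal K$ collapses to the sphere $\{t=2,|x|=1\}$, the ``initial'' data for the hyperboloidal energy actually lives on the flat piece $D_s$ rather than on a non-degenerate hyperboloid, so one must carefully re-express the energy density $|\partial(\partial^I L^J u)|^2+\delta^2|\partial^I L^J u|^2$ at $t=2$ purely in terms of $\partial^\alpha u_0$ and $\partial^\alpha u_1$ via repeated use of the PDE to eliminate higher-order time derivatives. The hypothesis $\nu>-\f12$ intervenes only to guarantee that the coupling $\delta^{2\nu+1}$ in the Gronwall bound stays (in fact goes to zero) as $\delta\to 0$, so the Gronwall constant on $[s_0^*,B]$ remains uniform in $\delta$.
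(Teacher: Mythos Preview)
Your proposal is correct and follows essentially the same route as the paper: derive the hyperboloidal energy identity for $\partial^I L^J u$ over $V_s$ (extended to $G_s$ for $s\ge s_0$), bound the flat $t=2$ flux on $D_s$ by $\delta^{2(\nu+1)}$ via the local estimates from Proposition~3.1 together with the equation to eliminate higher time derivatives, put the pointwise bound \eqref{L-S} on the two low-order factors of the cubic nonlinearity (using that $t/s'$ is bounded on this compact region), and close by Gronwall on $[s_0^*,B]$ using $\nu>-\tfrac12$. The only cosmetic difference is that the paper writes the estimate in differentiated form, applying $2ab\le a^2+b^2$ to obtain a Gronwall coefficient $1+C\delta^{4\nu+2}$ in place of your $C\delta^{2\nu+1}$, and it silently drops the cone flux (which in fact vanishes by finite propagation speed) rather than discarding it by sign as you do.
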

\begin{proof}
Set $U_{IJ}=\p^IL^Ju$.
Acting  $\p^IL^J$ and $2\partial_tU_{IJ}$ on both sides of   \eqref{scaling equation} respectively,
then it follows from the integration on the domain $V_s$ that
\begin{equation}\label{L-11}
\mathcal{E}^\delta(s,U_{IJ})=\int_{D_s}\left(|\p U_{IJ}|^2+\dl^2U_{IJ}^2\right)dx
+\int_{V_s}\left(2\p_t U_{IJ}\p^IL^JF_\dl\left(u,\p u\right)\right)dtdx.
\end{equation}

Note that
\begin{equation*}
\int_{D_s}\left(|\p U_{IJ}|^2+\dl^2U_{IJ}^2\right)dx=\int_{\sqrt{4-s^2}}^1\int_{\Bbb S^2} \rho^2\left(|\p U_{IJ}|^2+\dl^2U_{IJ}^2\right)(\rho,\omega)d\omega d\rho,
\end{equation*}
where $d\omega$ stands for the area element on unit sphere $\Bbb S^2$.
A direct computation yields
\begin{equation*}
\begin{aligned}
\frac{d}{ds}\int_{D_s}\left(|\p U_{IJ}|^2+\dl^2U_{IJ}^2\right)dx&=\frac{s}{\sqrt{4-s^2}}\int_{\Bbb S^2} (4-s^2)\left(|\p U_{IJ}|^2+\dl^2U_{IJ}^2\right)\left(\sqrt{4-s^2},\omega\right)d\omega
\\
&=\frac{s}{\sqrt{4-s^2}}\int_{\Lambda_s}\left(|\p U_{IJ}|^2+\dl^2U_{IJ}^2\right)d\sigma ,
\end{aligned}
\end{equation*}
where $d\sigma=(4-s^2)d\omega$ represents the area element of $\Lambda_s$.

Let $H$ be the Heaviside function and $\delta_0$ be the Dirac function. Then one has
\begin{equation*}
\begin{aligned}
  \frac{\partial }{{\partial s}}H( {\sqrt {{s^2} + {{\left| x \right|}^2}}  - t}) = \frac{s}{{\sqrt {{s^2}
  + {{\left| x \right|}^2}} }}{\delta _0}\left( {\sqrt {{s^2} + {{\left| x \right|}^2}}  - t} \right)
\end{aligned}
\end{equation*}
and
\begin{equation}\label{CY-10}
\begin{aligned}
\frac{d}{ds}\int_{V_s}\left(2\p_t U_{IJ}\p^IL^JF_\dl\left(u,\p u\right)\right)dtdx&=\int_{\mathbb{R}^{1+3}}
 \frac{{\partial H( {\sqrt {{s^2} + {{\left| x \right|}^2}}-t} )}}{{\partial s}} \left(2{\partial _t}U_{IJ}\p^IL^JF_\delta (u,\partial u)\right)dtdx   \\
&=\int_{\varkappa_s}\left(2\frac{s}{t}\p_t U_{IJ}\p^IL^JF_\dl\left(u,\p u\right)\right)dx.
\end{aligned}
\end{equation}
Therefore,
\begin{equation}\label{L-12}
\begin{aligned}
\frac{d}{ds}\mathcal{E}^\delta(s,U_{IJ})&=\frac{s}{\sqrt{4-s^2}}\int_{\Lambda_s}\left(|\p U_{IJ}|^2+\dl^2U_{IJ}^2\right)d\sigma
+\int_{\varkappa_{s}} \left(2\frac{s}{t}\p_t U_{IJ}\p^IL^JF_\dl\left(u,\p u\right)\right)dx
\\
&\le\frac{s}{\sqrt{4-s^2}}\int_{\Lambda_s}\left(|\p U_{IJ}|^2+\dl^2U_{IJ}^2\right)d\sigma+\int_{\varkappa_s}\left|\frac{s}{t}\p_t U_{IJ}\right|^2dx+\int_{\varkappa_s}\left|\p^IL^JF_\dl(u,\p u)\right|^2dx.
\end{aligned}
\end{equation}
By Lemma 2.2, we have
\begin{equation*}
\begin{aligned}
\p^IL^JF_\dl(u,\p u)&= \sum\limits_{\scriptstyle I_1+I_2+I_3=I\hfill\atop
\scriptstyle J_1+J_2+J_3=J\hfill}\sum_{j,k,l=-1}^3 \t{g}^{jkl}(\dl) \p^{I_1}L^{J_1}\p_ju\p^{I_2}L^{J_2}\p_ku\p^{I_3}L^{J_3}\p_lu\\
&= \sum\limits_{\scriptstyle I_1+I_2+I_3=I\hfill\atop
\scriptstyle J_1+J_2+J_3=J\hfill}\sum\limits_{\scriptstyle |J_1^'|\le |J_1|\hfill\atop
{\scriptstyle |J_2^'|\le |J_2|\hfill\atop
\scriptstyle |J_3^'|\le |J_3|\hfill}} \sum\limits_{\scriptstyle j,k,l=-1\hfill\atop
\scriptstyle j',k',l'=-1\hfill}^{3}     \t{g}^{jkl}(\dl) c_{jj' }^{J_1^'J_1} c_{kk'}^{J_2^'J_2}
c_{ll'}^{J_3^'J_3}\p_{j'} \p^{I_1}L^{J_1^'} u \cdot \p_{k'}\p^{I_2}L^{J_2^'} u \cdot\p_{l'}\p^{I_3}L^{J_3^'}u.
\end{aligned}
\end{equation*}
Note that at most one  case holds among $|I_1|+|J_1|>\frac{N}{2}$, $|I_2|+|J_2|>\frac{N}{2}$ and $|I_3|+|J_3|>\frac{N}{2}$.
Without loss of generality, $|I_1|+|J_1|+|I_2|+|J_2|\le \frac{N}{2}$ is assumed. Due to $N\ge4$, one can
estimate $\left|\p_{j'} \p^{I_1}L^{J_1^'} u \cdot \p_{k'}\p^{I_2}L^{J_2^'} u\right|$ by \eqref{L-S}. Together with
$\frac{\sqrt{3}}{T_0}\le \frac{s}{t}\le 1$ for  $(t,x)\in [2,T_0]\times \Bbb R^3$, this yields
\begin{equation*}
\int_{\varkappa_s}\left|\p^IL^JF_\dl(u,\p u)\right|^2dx\le C\dl^{4\nu+2}\sum_{|I_3|+|J_3^'|\le N}\mathcal{E}^\delta(s,U_{I_3J_3^'}).
\end{equation*}
By integrating \eqref{L-12} from $s_0^*$ to $s$, one arrives at
\begin{equation}\label{CY-11}
\begin{aligned}
\mathcal{E}^\delta(s,U_{IJ})\le \int_{D_s}\left(|\p U_{IJ}|^2+\dl^2U_{IJ}^2\right)dx
+\int_{s_0^*}^s\left(1+C\dl^{4\nu+2}\right)  \sum_{|I'|+|J'|\leq N}\mathcal{E}^\delta(s',U_{I'J'})ds'.
\end{aligned}
\end{equation}
Therefore, summing \eqref{CY-11} over $|I|+|J|\le N$ and applying Gronwall's lemma  deduce
\begin{equation}\label{CY-12}
\begin{aligned}
\sum_{|I|+|J|\leq N}\mathcal{E}^\delta(s,U_{IJ})&\le \int_{D_s}\sum_{|I|+|J|\le N}\left(|\p U_{IJ}|^2+\dl^2U_{IJ}^2\right)dx+C\int_{s_0^*}^s\left(1+C\dl^{4\nu+2}\right) \sum_{|I|+|J|\leq N}\mathcal{E}^\delta(s',U_{IJ})ds'
\\
&\le e^{2C\left(1+C\dl^{4\nu+2}\right) }\int_{\left\{|x|\le 1\right\}}\sum_{|I|+|J|\le N}\left(|\p U_{IJ}|^2+\dl^2U_{IJ}^2\right)(2,x)dx.
\end{aligned}
\end{equation}
Due to $\nu>-\frac{1}{2}$, then one has from \eqref{CY-12} that for small $\delta$
and $s_0^*\le s \le B$,
\begin{equation*}
M_N^\delta(s)\le C\dl^{\nu +1}.
\end{equation*}
\end{proof}

\section{\textbf{Proof of Theorem \ref{mainthm}}}

By Proposition 3.2, we have
\begin{equation}\label{G-01}
M_N^\delta(s_0)=\sum_{|I|+|J|\leq N}\mathcal{E}^\delta(s_0,\partial^IL^Ju)^{\frac{1}{2}}\leq
C_0{\delta ^{\nu  +1}},
\end{equation}
where $C_0>0$ is a fixed constant derived by Proposition 3.2.
Assume that for $Q=3C_0$, any fixed $\tilde{T}>2$ and
small $\dl>0$, it holds
\begin{equation}\label{G-02}
M_N^\delta(s)\leq Q{\delta ^{\nu+ 1}}\quad\text{as $2\le s \le \tilde{T}$}.
\end{equation}
Next we show  that  \eqref{G-02} holds for $Q/2$ instead of $Q$. Acting   $2\partial_tu$ on both sides
of   \eqref{scaling equation} and integrating on the domain $G_s$ yield
\begin{equation*}
\begin{aligned}
{\mathcal{E}^\delta }\left( {s,u} \right) = {\mathcal{E}^\delta }\left( {s_0,u} \right)
+\int_{G_s} {2{\partial _t}uF_\delta (u,\partial u)dtdx}.
\end{aligned}
\end{equation*}

\begin{lem}
It holds that
\begin{equation}\label{G-03}
\begin{aligned}
 {{\mathcal{E}}^\delta(s,u)}^{1/2}\leq
 {{\mathcal{E}}^\delta(2,u)}^{1/2}+\int_2^s \left\| F_\dl(u,\partial u) \right\|_{L_{\varkappa_{s'}}^2}ds'.
\end{aligned}
\end{equation}
\end{lem}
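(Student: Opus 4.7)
The plan is to run the standard hyperboloidal energy estimate on the identity
$$\mathcal{E}^\delta(s,u) = \mathcal{E}^\delta(2,u) + \int_{G_s} 2\partial_t u \, F_\delta(u,\partial u) \, dt\,dx$$
that was obtained just before the lemma by multiplying the equation \eqref{scaling equation} by $2\partial_t u$ and integrating over $G_s$.

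First I would differentiate this identity in $s$. Exactly as in the derivation of \eqref{CY-10} in the proof of Proposition 3.2, the coarea/Heaviside calculation gives
$$\frac{d}{ds}\mathcal{E}^\delta(s,u) = \int_{\varkappa_s} 2\,\frac{s}{t}\,\partial_t u \cdot F_\delta(u,\partial u)\,dx.$$
Here the factor $s/t$ comes from $\partial_s H\bigl(\sqrt{s^2+|x|^2}-t\bigr) = \frac{s}{t}\,\delta_0(\sqrt{s^2+|x|^2}-t)$ restricted to the hyperboloid $\varkappa_s$.

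Next I would apply Cauchy--Schwarz to the right-hand side:
$$\left|\int_{\varkappa_s} 2\,\tfrac{s}{t}\,\partial_t u \cdot F_\delta(u,\partial u)\,dx\right| \le 2\,\bigl\|\tfrac{s}{t}\partial_t u\bigr\|_{L^2_{\varkappa_s}} \bigl\|F_\delta(u,\partial u)\bigr\|_{L^2_{\varkappa_s}}.$$
The first factor is controlled by the energy itself via the equivalent form \eqref{equivalent energy}, which gives $\|\tfrac{s}{t}\partial_t u\|_{L^2_{\varkappa_s}} \le \mathcal{E}^\delta(s,u)^{1/2}$. Combining, I would get
$$\frac{d}{ds}\mathcal{E}^\delta(s,u) \le 2\,\mathcal{E}^\delta(s,u)^{1/2}\,\bigl\|F_\delta(u,\partial u)\bigr\|_{L^2_{\varkappa_s}}.$$

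Finally, dividing by $2\,\mathcal{E}^\delta(s,u)^{1/2}$ (with the usual regularization $\mathcal{E}^\delta + \varepsilon$ and then $\varepsilon \to 0$ if one wants to be careful about vanishing energies) yields
$$\frac{d}{ds}\mathcal{E}^\delta(s,u)^{1/2} \le \bigl\|F_\delta(u,\partial u)\bigr\|_{L^2_{\varkappa_s}},$$
and integration from $2$ to $s$ produces \eqref{G-03}. There is no real obstacle here; the only points needing slight care are the area-element bookkeeping in the hyperboloidal derivative (already worked out in \eqref{CY-10}) and using \eqref{equivalent energy} to absorb $\|\tfrac{s}{t}\partial_t u\|_{L^2_{\varkappa_s}}$ into $\mathcal{E}^\delta(s,u)^{1/2}$ without picking up extra constants.
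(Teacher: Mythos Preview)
Your proof is correct and matches the paper's own argument essentially step for step: differentiate the energy identity via the Heaviside/coarea computation to obtain \eqref{G-04}, apply Cauchy--Schwarz with $\|\tfrac{s}{t}\partial_t u\|_{L^2_{\varkappa_s}}\le \mathcal{E}^\delta(s,u)^{1/2}$ from \eqref{equivalent energy}, pass to $\frac{d}{ds}\mathcal{E}^\delta(s,u)^{1/2}$, and integrate from $2$ to $s$.
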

\begin{proof}
As in \eqref{CY-10}, one has
\begin{equation}\label{G-04}
\begin{aligned}
\frac{{\partial {{\mathcal{E}}}^\delta\left( {s,u} \right)}}{{\partial s}}&=\int_{\varkappa_s}
2\frac{s}{t}{\partial _t}uF_\delta (u,\partial u)dx.
\end{aligned}
\end{equation}
Note that
\begin{equation*}
\begin{aligned}
\int_{\varkappa_s} |2\frac{s}{t}{\partial _t}uF_\delta (u,\partial u)|dx \le
2{\left\| F_\dl(u,\partial u) \right\|_{L_{\varkappa_s}^2}}{\left\| {\frac{s}{t}{\partial _t}u} \right\|_{L_{\varkappa_s}^2}}
\le  2{\left\| F_\dl(u,\partial u) \right\|_{L_{\varkappa_s}^2}}{{\mathcal{E}}^\delta(s,u)}^{1/2}.
\end{aligned}
\end{equation*}
This, together with \eqref{G-04}, yields
\begin{equation*}\label{4-23}
\begin{aligned}
\frac{{\partial  {{\mathcal{E}}}^\delta\left( {s,u} \right)}}{{\partial s}}&\leq 2{\left\|F_\dl(u,\partial u)\right\|_{L_{\varkappa_s}^2}}{{\mathcal{E}}^\delta(s,u)}^{1/2}.
\end{aligned}
\end{equation*}
Namely,
\begin{equation*}
\begin{aligned}
  \frac{{\partial { {\mathcal{E}}^\delta\left( {s,u} \right)}}^{1/2}}{{\partial s}}\leq \left\| F_\dl(u,\partial u) \right\|_{L_{\varkappa_s}^2}.
\end{aligned}
\end{equation*}
Therefore,
\begin{equation*}
\begin{aligned}
{{\mathcal{E}}^\delta(s,u)}^{1/2}
\leq{\mathcal{E}^\delta(2,u)}^{1/2}+\int_2^s \left\| F_\dl(u,\partial u) \right\|_{L_{\varkappa_{s'}}^2}ds'.
\end{aligned}
\end{equation*}
\end{proof}

\begin{prop}
If $M_N^\delta(s)\leq Q{\delta ^{\nu  + 1}}$ holds for $s\in [s_0,\tilde{T}]$, we then have that for small $\dl$,
\begin{equation}\label{G-05}
M_N^\delta(s)\leq \frac{Q}{2}{\delta ^{\nu+1}}.
\end{equation}
\end{prop}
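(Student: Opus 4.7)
The plan is to close a standard bootstrap/continuity estimate. Because $\partial_\alpha$ and $L_a$ commute with $\square+\delta^2$, applying $\partial^I L^J$ to \eqref{scaling equation} yields $\square U_{IJ}+\delta^2 U_{IJ}=\partial^I L^J F_\delta(u,\partial u)$ for $U_{IJ}=\partial^I L^J u$. Feeding this into Lemma~4.1, summing over $|I|+|J|\le N$, and invoking the initial bound \eqref{G-01} at $s=s_0=2$, I reduce \eqref{G-05} to
\begin{equation*}
\sum_{|I|+|J|\le N}\int_{s_0}^{s}\bigl\|\partial^I L^J F_\delta(u,\partial u)\bigr\|_{L^2_{\varkappa_{s'}}}\,ds'\;\le\;\Bigl(\tfrac{Q}{2}-C_0\Bigr)\delta^{\nu+1}
\end{equation*}
under the bootstrap hypothesis \eqref{G-02}.

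The heart of the proof is a cubic $L^\infty$--$L^\infty$--$L^2$ estimate. Using Lemma~2.2 to commute $\partial^I L^J$ past the nonlinearity, each resulting term is a product of three factors of the shape $\partial_{j'}\partial^{I_k}L^{J_k'}u$ (or $\partial^{I_k}L^{J_k'}u$ when $j=-1$) with $\sum_{k=1}^{3}(|I_k|+|J_k'|)\le N$. Since $N\ge 4$, at most one of the three index sums exceeds $N/2$, so two factors fall within the range $|I|+|J|\le N-2$ of Lemmas~2.3--2.4 and can be placed in $L^\infty_{\varkappa_s}$, while the remaining factor is placed in $L^2_{\varkappa_s}$ via \eqref{Y0-4}. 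For the worst term $\delta^{-1}(\partial u)^3$ (the largest negative power of $\delta$ arising in $\tilde g^{jkl}(\delta)$), the splitting
\begin{equation*}
(\partial u_1)(\partial u_2)(\partial u_3)=\Bigl(\tfrac{t}{s}\partial u_1\Bigr)\Bigl(\tfrac{t}{s}\partial u_2\Bigr)\Bigl(\tfrac{s}{t}\Bigr)^{2}(\partial u_3),
\end{equation*}
together with Lemma~2.4 giving $\|\tfrac{t}{s}\partial u_k\|_{L^\infty_{\varkappa_s}}\lesssim s^{-1}M_N^\delta(s)$ for the two low-order factors and the pointwise bound $(s/t)^{2}\le s/t$ combined with \eqref{Y0-4} giving $\|(\tfrac{s}{t})^{2}\partial u_3\|_{L^2_{\varkappa_s}}\lesssim M_N^\delta(s)$, yields
\begin{equation*}
\bigl\|\partial^I L^J\bigl[\delta^{-1}(\partial u)^3\bigr]\bigr\|_{L^2_{\varkappa_s}}\;\lesssim\;\delta^{-1}s^{-2}\bigl(M_N^\delta(s)\bigr)^3\;\le\;Q^{3}\delta^{3\nu+2}s^{-2}.
\end{equation*}
The mixed terms $u(\partial u)^2$, $u^2\partial u$, $u^3$ are handled identically, invoking $\|\delta u\|_{L^\infty_{\varkappa_s}}\lesssim s^{-3/2}M_N^\delta(s)$ from Lemma~2.3 on the factors of $u$; these contribute strictly faster $s$-decay and at least as many $\delta$-factors.

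Since $\int_{s_0}^{s}(s')^{-2}\,ds'\le 1/2$ is uniform in $s$, summation over $|I|+|J|\le N$ and integration deliver
\begin{equation*}
M_N^\delta(s)\;\le\;C_0\delta^{\nu+1}+CQ^{3}\delta^{3\nu+2}\;=\;\delta^{\nu+1}\bigl(C_0+CQ^{3}\delta^{2\nu+1}\bigr).
\end{equation*}
Because $\nu>-1/2$ implies $2\nu+1>0$, choosing $\delta$ small depending on $Q=3C_0$ forces $CQ^{3}\delta^{2\nu+1}\le C_0/2$, whence $M_N^\delta(s)\le\tfrac{3C_0}{2}\delta^{\nu+1}=\tfrac{Q}{2}\delta^{\nu+1}$, as required. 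The main obstacle is arranging the $L^\infty$--$L^2$ H\"older split so that the $t/s$ and $s/t$ weights cancel and the $s'$-integrand decays at the integrable rate $(s')^{-2}$; here the use of Lemma~2.4 (rather than Lemma~2.3) on the two $L^\infty$ factors is essential, since Lemma~2.3 supplies $s^{-3/2}$-decay only for the weighted derivative $\tfrac{s}{t}\partial$ and would not match the available energy norm $\|\tfrac{s}{t}\partial\cdot\|_{L^2_{\varkappa_s}}$ on the third factor.
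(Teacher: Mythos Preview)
Your argument is correct and follows the same overall scheme as the paper: apply Lemma~4.1 to $\partial^I L^J u$, expand the cubic nonlinearity via Lemma~2.2, and control each trilinear term by an $L^\infty$--$L^\infty$--$L^2$ split using the weighted Sobolev inequalities on hyperboloids. The tactical choices differ slightly. You place both low-order $L^\infty$ factors under Lemma~2.4 (weight $t/s$), which yields integrand decay $s^{-2}$ and lets you close directly by the bootstrap without Gronwall; the paper instead uses one factor with Lemma~2.3 (weight $s/t$), one with Lemma~2.4, absorbs the leftover $t/s\le s$, obtains $s^{-3/2}$ decay, and closes via Gronwall. Your route is marginally cleaner. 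Two small points: (i) your claim that the mixed terms are \emph{strictly} better is not quite right in every subcase---for $u(\partial u)^2$ with the undifferentiated factor carrying the top derivatives one has $\|u\|_{L^2_{\varkappa_s}}\lesssim\delta^{-1}M_N^\delta(s)$, and the $\delta^{-1}$ exactly cancels the $\delta$ gained in $\tilde g$, giving again $\delta^{-1}s^{-2}(M_N^\delta)^3$; this still matches your worst case, so nothing is lost. (ii) Your remark that using Lemma~2.4 on both $L^\infty$ factors is ``essential'' is too strong: the paper's alternative balance shows that Lemma~2.3 on one factor, together with $t/s\le s$, works equally well.
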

\begin{proof}
It follows from a direct computation that for $|I|\leq N$,
\begin{equation*}
\begin{aligned}
\left( \Box + {\delta ^2} \right){L^I}u =  {L^I}{F_\delta }\left( {u,\partial u} \right).
\end{aligned}
\end{equation*}
Applying Lemma 4.1 yields
\begin{equation*}
\begin{aligned}
 {{\mathcal{E}}^\delta(s, {{L^I}u})}^{1/2}\leq
 {{\mathcal{E}}^\delta(2, {{L^I}u})}^{1/2}+\int_2^s \left\|{L^I}{F_\delta }\left( {u,\partial u} \right) \right\|_{L_{\varkappa_{s'}}^2}ds'.
\end{aligned}
\end{equation*}
By Lemma 2.2, one has
\begin{equation}\label{G-06}
\begin{aligned}
L^IF_\delta \left( {u,\partial u} \right) = \sum\limits_{{I_1} + {I_2}+I_3 = I} \sum\limits_{\scriptstyle |I_1^'|\le |I_1|\hfill\atop
{\scriptstyle |I_2^'|\le |I_2|\hfill\atop
\scriptstyle |I_3^'|\le |I_3|\hfill}} \sum\limits_{\scriptstyle j,k,l=-1\hfill\atop
\scriptstyle j',k',l'=-1\hfill}^{3}     \t{g}^{jkl}(\dl) c_{jj'}^{I_1^'I_1} c_{kk'}^{I_2^'I_2} c_{ll'}^{I_3^'I_3}\p_{j'} L^{I_1^'} u \cdot \p_{k'} L^{I_2^'} u\cdot \p_{l'} L^{I_3^'}u.
\end{aligned}
\end{equation}
Note that for $|I|\leq N$,
\begin{equation} \label{G-07}
\begin{aligned}
 \t M_{N}^\delta(s)\leq
\t M_{N}^\delta(2)+\int_2^s\sum\limits_{|I|\leq N} \left\| {L^I}{F_\delta }\left( {u,\partial u} \right)  \right\|_{L_{\varkappa_{s'}}^2}ds',
\end{aligned}
\end{equation}
where $\t M_{N}^\delta(s):=\sum_{|I|\le N}{\mathcal{E}}^\delta(s, {{L^I}u})^{1/2}$.
From \eqref{G-06}, there will appear such typical terms in it:
\begin{equation*}
\begin{aligned}
\left\{ \begin{array}{l}
\delta^{-1}{\partial _\alpha }{L^{I_1^'}}u{\partial _\beta }{L^{I_2^'}}u{\partial _\gamma }{L^{I_3^'}}u,
\quad~~\alpha ,\beta ,\gamma  \in \left\{ {0,1,2,3} \right\},\\
{\partial _\alpha }{L^{I_1^'}}u{\partial _\beta }{L^{I_2^'}}u{L^{I_3^'}}u,\qquad \quad~~~~~  \alpha ,\beta  \in \left\{ {0,1,2,3} \right\},\\
\delta{\partial _\alpha }{L^{I_1^'}}u  {L^{I_2^'}}u {L^{I_3^'}}u,
\qquad \qquad~~~~~  \alpha  \in \left\{ {0,1,2,3} \right\},\\
\delta^2{L^{I_1^'}}u{L^{I_2^'}}u{L^{I_3^'}}u.
\end{array} \right.
\end{aligned}
\end{equation*}
By Lemma 2.3 and Lemma 2.4, when $|I_1|+|I_2|\le  \frac{N}{2}$ is assumed as in Proposition 3.2, one has
\begin{equation*}
\begin{aligned}
\delta^{-1}{\left\| {\partial _\alpha }{L^{I_1^'}}u{\partial _\beta }{L^{I_2^'}}u{\partial _\gamma }{L^{I_3^'}}u\right\|_{{L_{{\varkappa_s}}^2}}} &\le \delta^{-1}s{\left\| {\frac{s}{t}{\partial _\alpha }{L^{I_1^'}}u} \right\|_{{L_{{\varkappa_s}}^\infty }}}{\left\| {\frac{t}{s}{\partial _\beta }{L^{I_2^'}}u} \right\|_{{L_{{\varkappa_s}}^\infty }}}{\left\| {\frac{s}{t}{\partial _\gamma }{L^{I_3^'}}u} \right\|_{{L_{{\varkappa_s}}^2}  }}
\leq CQ^2\delta^{2\nu+1}s^{-3/2}  \t M_{N}^\delta(s),
\\
{\left\|  {\partial _\alpha }{L^{I_1^'}}u{\partial _\beta }{L^{I_2^'}}u {L^{I_3^'}}u\right\|_{{L_{{\varkappa_s}}^2}}} &\le  {\left\| {\frac{s}{t}{\partial _\alpha }{L^{I_1^'}}u} \right\|_{{L_{{\varkappa_s}}^\infty }}}{\left\| {\frac{t}{s}{\partial _\beta }{L^{I_2^'}}u} \right\|_{{L_{{\varkappa_s}}^\infty }}}{\left\| {{L^{I_3^'}}u} \right\|_{{L_{{\varkappa_s}}^2}}}
\leq CQ^2\delta^{2\nu+1}s^{-5/2}  \t M_{N}^\delta(s),
\\
\delta{\left\|  {\partial _\alpha }{L^{I_1^'}}u {L^{I_2^'}}u {L^{I_3^'}}u\right\|_{{L_{{\varkappa_s}}^2}}} &\le \delta s{\left\| {\frac{s}{t}{\partial _\alpha }{L^{I_1^'}}u} \right\|_{{L_{{\varkappa_s}}^\infty }}}{\left\| { { L^{I_2^'}}u} \right\|_{{L_{{\varkappa_s}}^\infty }}}{\left\| {  {L^{I_3^'}}u} \right\|_{{L_{{\varkappa_s}}^2}}}
\leq CQ^2\delta^{2\nu+1}s^{-2} \t M_{N}^\delta(s),
\\
\delta^2{\left\|  {L^{I_1^'}}u {L^{I_2^'}}u {L^{I_3^'}}u\right\|_{{L_{{\varkappa_s}}^2}}} &\le \delta^2{\left\| { {L^{I_1^'}}u} \right\|_{{L_{{\varkappa_s}}^\infty }}}{\left\| { { L^{I_2^'}}u} \right\|_{{L_{{\varkappa_s}}^\infty }}}{\left\| {  {L^{I_3^'}}u} \right\|_{{L_{{\varkappa_s}}^2}}}
\leq CQ^2\delta^{2\nu+1}s^{-3} \t M_{N}^\delta(s).
\end{aligned}
\end{equation*}
Together with \eqref{G-07} and Gronwall's inequality, this yields
\begin{equation*}
\begin{aligned}
\t M_{N}^\delta(s)&\leq
\t M_{N}^\delta(2)+CQ^2\delta^{2\nu+1}\int_2^s {\left(s'\right)}^{-3/2}\t M_{N}^\delta(s')ds'
\end{aligned}
\end{equation*}
and
\begin{equation} \label{G-08}
\begin{aligned}
\t M_{N}^\delta(s)& \le   \t M_{N}^\delta(2)\exp\left\{\int_2^sCQ^2\delta^{2\nu+1}{\left(s'\right)}^{-3/2}ds'\right\}
\\
&\le \t M_{N}^\delta(2)\exp\left\{CQ^2\delta^{2\nu+1}\right\}
\\
&\le \frac{Q}{2}\delta^{\nu +1}\qquad \text{(due to $\nu>-\f12$ and the smallness of $\delta$)}.
\end{aligned}
\end{equation}
Similarly,
\begin{equation}\label{G-09}
 M_N^\delta(s)\le \frac{Q}{2}\delta^{\nu +1}.
\end{equation}
\end{proof}

Next we derive the estimates for the solution in \eqref{0-S}. In the coordinate $(\tau,z)$, without confusions, we still denote $\p=\left(\p_{\tau},\p_{z_1},\p_{z_2},\p_{z_3}\right)$, $L=\left(L_1,L_2,L_3\right)$ and  $L_a=z_a\p_{\tau}+\tau\p_{z_a} (a=1,2,3)$.
Note that the solution $w(\tau,z)$ of \eqref{scaling equation} satisfies
\begin{equation*}
|\partial^IL^Jw(\tau,z)|\ls \left\{ \begin{array}{l}
{{\delta ^{\nu+1} }  },  ~~2\le \tau \le 5/2,\\
{{\delta ^\nu } {{\tau}^{ - \frac{3}{2}}}},~~\tau>5/2
\end{array} \right.
\end{equation*}
by \eqref{L-S}, \eqref{Y-Sob} and \eqref{G-09}.
In addition, since the transformation $(\tau,z)=(\frac{t}{\delta }+2,
\frac{x}{\delta})$ implies $\partial_tu =\delta^{-1}\partial_\tau w$ and $\partial_{x_1}u =\delta^{-1}\partial_{z_1}w $,
 then
\begin{equation*}
\left| {{\partial_\delta ^{\rm{I}}}{L_\delta^J}u\left( {t,x} \right)} \right| \ls  \left\{ \begin{array}{l}
{{\delta ^{\nu+1} }  }, \qquad \quad\quad ~~t\le \delta/2,
\\
{{\delta ^\nu } {{\left(\delta^{-1} t+2\right)}^{ - \frac{3}{2}}}},~~t>\delta/2.
\end{array} \right.
\end{equation*}
Similarly, when $2\le \tau \le 5/2$, according to \eqref{L-S}, $|\partial \partial^IL^Jw(\tau,z)|\ls \delta^{\nu+1}$ holds.
On the other hand,  when $\tau>5/2$, we have  $|\partial \partial^IL^Jw(\tau,z)|
=\frac{\tau}{s}|\frac{s}{\tau}\partial \partial^IL^Jw(\tau,z)|\ls \frac{\tau^{-1/2}}{s}\delta^{\nu+1}\ls \tau^{-1}\delta^{\nu+1}$
by $s=\sqrt{\tau^2-|z|^2}$, \eqref{Y-Sob} and \eqref{G-09}.
Thus,
\begin{equation*}
\left| {\partial_\delta{\partial_\delta ^{\rm{I}}}{L_\delta^J}u\left( {t,x} \right)} \right| \ls  \left\{ \begin{array}{l}
{{\delta ^{\nu+1} }  },\qquad \qquad \quad ~~t\le \delta/2,
\\
{{\delta ^{\nu+1} } {{\left(\delta^{-1} t+2\right)}^{ - 1}}},~~t>\delta/2.
\end{array} \right.
\end{equation*}

\section{\textbf{Proof  of Theorem \ref{mainthm-1}}}

Let $\alpha(\theta)$ be a  Friedrichs mollifier and $\alpha_{\frac{1}{16}}(\theta)=16^3\alpha(16\theta)$.
Set $f(\theta)=(\overline f*\alpha_{\frac{1}{16}})(\theta)\in C_0^\infty(-1,1)$ and $h(\theta)=(\overline h*\alpha_{\frac{1}{16}})(\theta)\in C_0^\infty(-1,1)$, where
 \begin{equation*}
\overline f \left( \theta  \right) = \left\{ \begin{array}{l}
1,\theta  \in \left[ { - \frac{{13}}{{16}}, - \frac{3}{{16}}} \right],\\
0,\theta  \in {\left[ { - \frac{{13}}{{16}}, - \frac{3}{{16}}} \right]^c}
\end{array} \right.\quad\text{and}\quad
\overline h \left( \theta  \right) = \left\{ \begin{array}{l}
1,\theta  \in \left[ { - \frac{{13}}{{16}},\frac{{13}}{{16}}} \right],\\
0,\theta  \in {\left[ { - \frac{{13}}{{16}},\frac{13}{{16}}} \right]^c}.
\end{array} \right.
\end{equation*}
In addition, define $g=(\overline g*\alpha_{\frac{1}{16}})(\theta)\in C_0^\infty(-1,1)$ with
\begin{equation*}\label{5-2}
\overline g \left( \theta  \right) = \left\{ \begin{array}{l}
\theta  + 1,\theta  \in \left[ { - \frac{13}{16},\frac{9}{16}} \right],\\
0,~~~~~~~\theta  \in {\left[ { - \frac{13}{16},\frac{9}{16}} \right]^c}.
\end{array} \right.
\end{equation*}
It is easy to know that
\begin{equation*}\label{5-1}
\left\{ \begin{array}{l}
f\left( \theta\right)  =1,~~\theta \in \left[ { - \frac{3}{4}, - \frac{1}{4}} \right],\\
f\left( \theta \right) \ge0,~~\theta \in \left[ { - \frac{3}{4}, - \frac{1}{4}} \right]^c,
\end{array} \right.
~~
\left\{ \begin{array}{l}
h\left( \theta \right) =1,~~\theta \in \left[ { - \frac{3}{4},  \frac{3}{4}} \right],\\
h\left( \theta \right) \ge 0,~~\theta \in\left[ { - \frac{3}{4},  \frac{3}{4}} \right]^c
\end{array} \right.
\end{equation*}
and
\begin{equation*}\label{5-2}
\left\{ \begin{array}{l}
g'\left( \theta \right)=1,~~\theta \in \left[ { - \frac{3}{4}, - \frac{1}{4}} \right],\\
g'\left( \theta \right) \ge 0,~~\theta \in \left[ { - 1, 0} \right].
\end{array} \right.
\end{equation*}
Choose ${u_0}\left( x \right) = 4g\left( {{x_1}} \right)h\left( {{x_2}} \right)h\left( {{x_3}} \right)$
and ${  u_1}\left( x \right)= 4f\left( {{x_1}} \right)h\left( {{x_2}} \right)h\left( {{x_3}} \right)$ in \eqref{equation}.
Define
\begin{equation*}\label{5-5}
\Upsilon(t)=\int_{{\mathbb{R}^3}} {{\partial _t}u{\partial _{{1}}}udx}.
\end{equation*}
It follows from a direct computation that for $\nu \le -\frac{1}{2}$,
\begin{equation*}
\begin{aligned}
\Upsilon(0)=\int_{{\mathbb{R}^3}} {{\delta^{2\nu+1}{u_1}\left( {\frac{x}{\delta }} \right)\partial _{{1}}}{u_0}\left( {\frac{x}{\delta }} \right)dx}  &> \int_{\left[ { - \frac{3}{4}\dl, - \frac{1}{4}\dl} \right] \times {{\left[ { - \frac{3}{4}\dl,  \frac{3}{4}\dl} \right]}^2}} {{\delta^{2\nu+1}{u_1}\left( {\frac{x}{\delta }} \right)\partial _{{1}}}{u_0}\left( {\frac{x}{\delta }} \right)dx}
\\
&\ge \frac{1}{2} \cdot {\left( {\frac{3}{2}} \right)^2}{4^2}{\delta ^3}{\delta ^{ - 1}}
= 18{\delta ^2}.
\end{aligned}
\end{equation*}
Assume that  there exists a classical solution $u\in C^2\left(\left[0,T^*\right)\times \mathbb{R}^3\right)$
to $\square u+u=(\partial_t u)^2\partial_1u$ with $T^*>0$ being the maximal existence time.
Due to
\begin{equation*}
\begin{aligned}
\Upsilon'(t)= \int_{{\mathbb{R}^3}} {\left( {{\partial _{t}^2}u{\partial _{{1}}}u + {\partial _t}u{\partial _{t1}^2}u} \right)dx}
= \int_{{\mathbb{R}^3}} {\left( {\Delta u - u + {{\left( {{\partial _t}u} \right)}^2}{\partial _{{1}}}u} \right){\partial _{{1}}}udx}+ \int_{{\mathbb{R}^3}} {{\partial _t}u{\partial _{t{1}}^2}udx}
\end{aligned}
\end{equation*}
and
\begin{equation*}
\begin{aligned}
\int_{{\mathbb{R}^3}} {\left( {\Delta u - u} \right){\partial _{{1}}}udx}
= \int_{{\mathbb{R}^3}} {{\partial _t}u{\partial _{t{1}}^2}udx}= 0,
\end{aligned}
\end{equation*}
then
\begin{equation*}
\begin{aligned}
\Upsilon'\left( t \right) = \int_{{\mathbb{R}^3}} {{{\left( {{\partial _t}u{\partial _{{1}}}u} \right)}^2}dx}.
\end{aligned}
\end{equation*}
On the other hand,
\begin{equation*}
\begin{aligned}
{\Upsilon^2}\left( t \right) &\le {\left\| {{\partial _t}u{\partial _{{1}}}u} \right\|_{{L^1}\left( {{\mathbb{R}^3}} \right)}}{\left\| {{\partial _t}u{\partial _{{1}}}u} \right\|_{{L^1}\left( {{\mathbb{R}^3}} \right)}}
\\
&\le {\left( {{{\left( {t + \delta } \right)}^{{3 \mathord{\left/
{\vphantom {3 2}} \right.
\kern-\nulldelimiterspace} 2}}}{{\left\| {{\partial _t}u{\partial _{{1}}}u} \right\|}_{{L^2}\left( {{\mathbb{R}^3}} \right)}}} \right)^2}
\\
&\le {\left( {t + \delta } \right)^3}\left\| {{\partial _t}u{\partial _{{1}}}u} \right\|_{{L^2}\left( {{\mathbb{R}^3}} \right)}^2
\\
&=\left( {t + \delta } \right)^3\Upsilon'(t).
\end{aligned}
\end{equation*}
This derives
\begin{equation*}
- \frac{d}{{dt}}\left( {\frac{1}{{\Upsilon\left( t \right)}}} \right) = \frac{{\Upsilon'\left( t \right)}}{{{\Upsilon^2}\left( t \right)}} \ge \frac{1}{{{{\left( {t + \delta } \right)}^3}}}.
\end{equation*}
Thus,
\begin{equation*}
\frac{1}{{\Upsilon\left( 0 \right)}} - \frac{1}{{\Upsilon\left( t \right)}} \ge \frac{1}{2}{\delta ^{ - 2}} -\frac{1}{2} {\left( {t + \delta } \right)^{ - 2}}
\end{equation*}
and
\begin{equation*}
2{\left( {t + \delta } \right)^2} \le \frac{1}{{\frac{1}{2}{\delta ^{ - 2}} - \frac{1}{{\Upsilon\left( 0 \right)}}\left( {1 - \frac{{\Upsilon\left( 0 \right)}}{{\Upsilon\left( t \right)}}} \right)}} < \frac{1}{{\frac{1}{2}{\delta ^{ - 2}} - \frac{1}{{18}}{\delta ^{ - 2}}}}.
\end{equation*}
This means $t < \left( {\frac{3}{{2\sqrt 2 }} - 1} \right)\delta $ and further $T^*\le \left( {\frac{3}{{2\sqrt 2 }} - 1} \right)\delta $.
Consequently, Theorem \ref{mainthm-1} is shown.

\vskip 0.5 true cm
{\bf \color{blue}{Conflict of interest}}
\vskip 0.2 true cm

{\bf On behalf of all authors, the corresponding author states that there is no conflict of interest.}

\vskip 0.3 true cm

{\bf \color{blue}{Data availability}}

\vskip 0.2 true cm

{\bf Data sharing is not applicable to this article as no new data were created.}

\vskip 0.3 true cm

\bibliographystyle{amsplain}

\end{document}